\begin{document}
\newtheorem{thm}{Theorem}[section]
\newtheorem{cor}[thm]{Corollary}
\newtheorem{lem}[thm]{Lemma}
\newtheorem{exm}[thm]{Example}
\newtheorem{prop}[thm]{Proposition}
\theoremstyle{definition}
\newtheorem{defn}[thm]{Definition}
\theoremstyle{remark}
\newtheorem{rem}[thm]{Remark}
\newcommand{\mat}[4]{
    \begin{pmatrix}
           #1 & #2 \\
           #3 & #4
      \end{pmatrix}
   }
\def\Z{\mathbb{Z}} 
\def\R{\mathcal{R}} 
\def\I{\mathcal{I}} 
\def\C{\mathbb{C}} 
\def\N{\mathbb{N}} 
\def\PP{\mathbb{P}} 
\def\Q{\mathbb{Q}} 
\def\L{\mathcal{L}} 
\def\ol{\overline} 
\def\bs{\backslash} 
\def\part{P} 

\newcommand{\gcD}{\mathrm {\ gcd}} 
\newcommand{\End}{\mathrm {End}} 
\newcommand{\Aut}{\mathrm {Aut}} 
\newcommand{\GL}{\mathrm {GL}} 
\newcommand{\SL}{\mathrm {SL}} 
\newcommand{\PSL}{\mathrm {PSL}} 
\newcommand{\Mat}{\mathrm {Mat}} 
\newcommand\ga[1]{\overline{\Gamma}_0(#1)} 
\newcommand\pro[1]{\mathbb{P}1(\mathbb{Z}_{#1})} 
\newcommand\Zn[1]{\mathbb{Z}_{#1}}
\newcommand\equi[1]{\stackrel{#1}{\equiv}}
\newcommand\pai[2]{[#1:#2]} 
\newcommand\modulo[2]{[#1]_#2} 
\newcommand\sah[1]{\lceil#1\rceil} 
\def\sol{\phi} 
\begin{center}
{\LARGE\bf Rigidity for circle diffeomorphisms with  breaks satisfying a Zygmund smoothness condition}
\footnote{MSC2000:  37C15,
37C40, 37E10, 37F25.
Keywords and phrases: circle diffeomorphism, break point, rotation
number, renormalization, rigidity.}
\\
\vspace{.25in} { \large{
H. A. Akhadkulov\footnote{School of Quantitative Sciences, University Utara Malaysia, CAS 06010,
UUM  Sintok, Kedah Darul Aman, Malaysia.\quad E-mail:akhadkulov@yahoo.com},
A. A. Dzhalilov\footnote{Turin Polytechnic University, Kichik Halka yuli 17,  Tashkent 100095, Uzbekistan.  E-mail: a\_dzhalilov@yahoo.com} and
K. M. Khanin\footnote{Department of Mathematics, University of Toronto, 40 St. George Street, Toronto, Ontario M5S 2E4, Canada.
E-mail: khanin@math.toronto.edu}
}}

\end{center}

\title{Rigidity for circle diffeomorphisms with  a break satisfying a Zygmund smoothness condition}

\begin{abstract}
Let $f$ and $\tilde{f}$ be two circle diffeomorphisms
with a break point,  with the same irrational rotation number of bounded type,
the same size of the break $c$ and satisfying   a certain Zygmund type smoothness condition
depending on a parameter $\gamma>2.$ We prove that under a certain condition imposed on
the break size $c$, the diffeomorphisms
$f$ and $\tilde{f}$ are $C^{1+\omega_{\gamma}}$-smoothly conjugate to each other,
where $\omega_{\gamma}(\delta)=|\log \delta|^{-(\gamma/2-1)}.$

\end{abstract}
\section{Introduction}
The problem of smoothness of a conjugacy between two circle  diffeomorphisms  is a classical problem in one-dimensional dynamics.
 Arnol'd \cite{Ar1961}  proved that any analytic circle diffeomorphism with a Diophantine rotation number, sufficiently close to the rigid rotation $f_{\rho}\rightarrow x+\rho$ is analytically conjugate to $f_{\rho}.$
First significant extension of Arnol'd's result was obtained by Herman \cite{He1979}. He proved that 
$C^{\infty}$-smooth circle diffeomorphism with a Diophantine rotation number is $C^{\infty}$-conjugate to $f_{\rho}.$
 Last forty years Herman's result was developed by Yoccoz \cite{Yo1984},  Khanin and Sinai \cite{KS1989}, Katznelson and Ornstein \cite{KO1989.1, KO1989.2}, and Khanin and Teplinsky \cite{KT2009} in virtue of their great discoveries, new ideas, methods, and phenomena. 
Summarising thus far,  if $f$ is $C^{2+\nu}$  and the rotation number  satisfies a certain Diophantine condition, then the conjugacy is $C^{1+\alpha}$
for some $0<\alpha<\nu.$ Moreover, in \cite{KO1989.2}, the authors considered a class of circle diffeomorphismsm bigger than $C^{2+\nu}.$
They proved that if $Df$ absolutely continuous and 
$D\log Df\in L_{p},$ for some $p>1$ then the conjugacy is   
is absolutely continuous provided its rotation number is bounded type.
 One of the last results on the progression of the regularity of conjugacy of circle diffeomorphisms have been contributed by
 Akhadkulov \emph{et al} \cite{{ADK2017}} by extending previous results for circle diffeomorphisms satisfying a certain Zygmund-type smoothness condition
 depending on a parameter $\gamma>0.$ 
It was shown that, if a circle diffeomorphism satisfies the  Zygmund condition for $\gamma>1/2$ then there exists a subset of irrational numbers of unbounded type such that the conjugacy is absolutely continuous provided its rotation number belongs to the above set.       
 Moreover, if  $\gamma>1$ then  the conjugacy is $C^{1}$-smooth for almost all irrational rotation numbers.
 It is important to remark that, in the case of diffeomorphisms, rigidity is guaranteed only
 when the rotation numbers  satisfy a certain Diophantine condition. 
 Recently, Khanin and Teplinsky \cite{KT2007} showed that in the presence  of \emph{critical points} or 
  \emph{break points}  points the rigidity may be stronger, i.e., valid for a "large" set of rotation numbers.  They have  showed that for
  the  diffeomorphisms of a circle with a single critical point, the \emph{robust rigidity} holds,
that is, the rigidity holds without any  Diophantine conditions.
The robust rigidity result depends  on exponential convergence of renormalizations so called \textit{renormalization} problem. The  
renormalization problem was proved by de Faria and de Melo for $C^{\infty}$-smooth critical
circle maps with irrational rotation numbers of bounded type \cite{MeloFarie I, MeloFarie II},
 and extended, in the analytic setting,  by Yampolsky \cite{Yampol} to cover
all irrational rotation numbers.
Recently, a remarkable rigidity  results also have been obtained by Guarino and de Melo \cite{pablo1} and Guarino \emph{ et al} \cite{pablo2}
in the case of  lower smoothness of critical circle maps. In \cite{pablo1}, it was proven a $C^{1+\alpha}$ (for a universal $\alpha>0$) rigidity  result for any two $C^3$
critical circle maps with the same irrational rotation number of bounded type and the same odd criticality. 
In the case of the class is  $C^{4},$ $C^{1}$-rigidity holds for any irrational rotation number and  $C^{1+\alpha}$- rigidity holds  for a full Lebesgue measure set of rotation numbers as shown in  \cite{pablo2}.

In the case of a break type singularity, the first rigidity results for $C^{2+\alpha}$ circle diffeomorphisms were obtained by Khanin and
Khmelev  \cite{KhKhem2003}, and  Khanin and Teplinsky \cite{KT2013}. 
In \cite{KhKhem2003}, rigidity theorem was proved 
for irrational rotation numbers with periodic \textit{partial quotients} and in \cite{KT2013}, for \textit{half bounded} (see the definition below) irrational rotation numbers.
Note that the robust rigidity does not hold for circle diffeomorphisms with breaks. Indeed, as shown in \cite{KhKocic2013}, there are irrational rotation numbers,
and pairs of analytic circle diffeomorphisms with breaks, with the same rotation number
and the same size of the break, for which any conjugacy between them is
not even Lipschitz continuous. The most remarkable results  in this direction
were obtained by Khanin and Koci\`{c} \cite{KSasa} and Khanin \emph{et al} \cite{KSE}.
In \cite{KSasa}, it was shown that the renormalizations of any two $C^{2+\alpha}$-smooth  circle diffeomorphisms
with a break point, with the same irrational rotation number and the same
size of the break, approach each other exponentially fast in the $C^{2}$-topology. 
This result implies that 
for almost
all irrational numbers, any two $C^{2+\alpha}$-smooth circle diffeomorphisms with a break, with the same
rotation number and the same size of the break, are $C^{1}$-smoothly conjugate to each
other as shown in \cite{KSE}.
The interesting problems  of circle maps are  the  rigidity and renormalizations problems
on the less regularities, for instance these  problems are  open for $C^{2+\alpha}$-smooth critical circle maps and
for circle diffeomorphisms with break points satisfying a Zygmund condition, even for bounded combinatorics.
The  renormalizations problem for circle diffeomorphisms
with a break satisfying a certain Zygmund condition is partially
solved in \cite{Renorm}.

In this paper we study the rigidity problem of
 two circle diffeomorphisms $f$ and $\tilde{f}$ with a break point,
 with the same irrational rotation number of bounded type,
the same size of the break $c$ and satisfying   a certain Zygmund type smoothness condition
depending on a parameter $\gamma>2.$ We prove that under a certain condition imposed on
the break size $c,$ the diffeomorphisms $f$
and $\tilde{f}$ are $C^{1+\omega_{\gamma}}$-smoothly conjugate to each other,
where $\omega_{\gamma}(\delta)=|\log \delta|^{-(\gamma/2-1)}.$
 The rest of this paper is organized as follows. In Section 2,
 the main notions and statement of main theorem are given.
In Section 3,  we show the existence of a solution of a cohomological equation
for the break-equivalent diffeomorphisms.
 In Section 4, some universal estimates for the ratio
 of the lengths of the segments of dynamical partition are obtained.
 Sections 5 and 6 are devoted to study the renormalizations and  closeness of rescaled points.
 Finally, in Section 7, the proof of main theorem is given.

\section{General settings and statement of main Theorem}
\subsection{Dynamical partition}
In this section, first we present some of the basic notations of circle maps and then  we estimate the ratio of lengths  of elements of the dynamical partition. Denote by  $\mathbb{S}^{1}=\mathbb{R}/\mathbb{Z}$  unit circle. 
Let  $f:\mathbb{S}^{1}\rightarrow \mathbb{S}^{1}$
be a circle homeomorphism  we denote its rotation number by
$\rho(f).$ It can be expressed as a continued fraction
$$
\rho(f)=1/(k_{1}+1/(k_{2}+...)):=[k_{1}, k_{2},...,k_{n},...).
$$
The sequence of positive integers $(k_{n})$ with $n\geq 1$  called
 \emph{partial quotients} and it is infinite if and only if $\rho(f)$ is irrational.
 We call $\rho:=\rho(f)$ is bounded type if $s(\rho):=\sup k_{n}<\infty.$
Let $p_{n}/q_{n}=[k_{1}, k_{2},...,k_{n}]$ 
be the sequence of rational convergents
of  $\rho.$
 The coprime numbers $p_{n}$ and $q_{n}$
satisfy the recurrence relations 
$$
p_{n}=k_{n}p_{n-1}+p_{n-2},\,\,\text{and
}\,\,q_{n}=k_{n}q_{n-1}+q_{n-2}
$$ 
 for $n\geq 1,$ where$p_0=0,$ $q_0=1$ and $p_{-1}=1,$ $q_{-1}=0.$
Let $\xi_{0}\in \mathbb{S}^{1}.$  Define
 $n$th \emph{fundamental segment} $\Delta^{(n)}_{0}:=\Delta^{(n)}_{0}(\xi_{0})$ as the circle arc
  $[\xi_{0}, f^{q_{n}}(\xi_{0})]$ if $n$ is even and $[f^{q_{n}}(\xi_{0}), \xi_{0}]$ if $n$ is odd.
  We shall also use the notations $\widehat{\Delta}^{(n-1)}_{0}=\Delta^{(n)}_{0} \cup \Delta^{(n-1)}_{0}$
  and $\check{\Delta}^{(n-1)}_{0}=\Delta^{(n-1)}_{0} \setminus \Delta^{(n+1)}_{0}.$
Certain number of images of fundamental segments $\Delta^{(n-1)}_{0}$ and $\Delta^{(n)}_{0},$ under the iterates of
 $f,$ cover whole circle without overlapping beyond the endpoints and form
$n$th \emph{dynamical partition} of the circle $\mathbb{S}^{1}$
$$
\mathcal{P}_{n}:=\mathcal{P}_{n}(\xi_{0}, f)=\left\lbrace \Delta_{j}^{(n)}:=f^{j}(\Delta^{(n)}_{0}),
 0\leq j<q_{n-1}\right\rbrace
\bigcup \left\lbrace \Delta_{i}^{(n-1)}:=f^{i}(\Delta^{(n-1)}_{0}),  0\leq i<q_{n}\right\rbrace.
$$
The partition $\mathcal{P}_{n+1}$  is a refinement of the partition $\mathcal{P}_{n}.$ Indeed,
the segments of order $n$ belong to $\mathcal{P}_{n+1}$ and each segment
$\Delta_{i}^{(n-1)},$ $0\leq i <q_{n}$ is partitioned into
$k_{n+1}+1$ segments belonging to $\mathcal{P}_{n}$ such that
\begin{equation}\label{partition}
\Delta_{i}^{(n-1)}=\Delta_{i}^{(n+1)}\cup\bigcup_{s=0}^{k_{n+1}-1}\Delta_{i+q_{n-1}+sq_{n}}^{(n)}.
\end{equation}
One can easily see that the endpoints of the segments from $\mathcal{P}_{n}$ form the set
$$
\Xi_{n}=\{\xi_{i}:=f^{i}(\xi_{0}),\, 0\leq i<q_{n}+q_{n-1}\}.
$$
We shall also use the extended set $\Xi^{\ast}_{n}=\Xi_{n}\cup \{\xi_{q_{n}+q_{n-1}}\}.$
Now we formulate a lemma which will be used in the sequel.
\begin{lem}\label{lemma1}
For every $m>n,$ we have the following decomposition
\begin{equation}\label{J1}
\Xi_{m}\cap \check{\Delta}^{(n-1)}_{0}=\bigcup_{\xi_{l}\in \Xi_{m}\cap\Delta^{(n)}_{0}\setminus \{\xi_{q_{n}}\}}
\bigcup_{s=0}^{k_{n+1}-1}\xi_{l+sq_{n}+q_{n-1}}.
\end{equation}
Furthermore, for every $\xi_{l}\in \Xi_{m}\cap\Delta^{(n)}_{0}\setminus \{\xi_{q_{n}}\}$
we have $\xi_{l+k_{n+1}q_{n}+q_{n-1}}=\xi_{l+q_{n+1}}\in \Xi^{\ast}_{m}\cap \widehat{\Delta}^{(n)}_{0}.$
\end{lem}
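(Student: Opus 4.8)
\noindent\emph{Plan of proof.} I would derive both assertions from the refinement identity \eqref{partition} together with two standard facts about dynamical partitions: that no element of $\mathcal P_{r}$ contains a point of $\Xi_{r}$ in its interior, and that $f^{q_{r+1}}(\Delta^{(r)}_{0})\subseteq\widehat\Delta^{(r)}_{0}$. Putting $i=0$ in \eqref{partition} and deleting the closed arc $\Delta^{(n+1)}_{0}$, which meets the remaining slices only at the vertex $\xi_{q_{n+1}}$, one gets
$$
\check\Delta^{(n-1)}_{0}=\Big(\bigcup_{s=0}^{k_{n+1}-1}\Delta^{(n)}_{q_{n-1}+sq_{n}}\Big)\setminus\{\xi_{q_{n+1}}\},\qquad
\Delta^{(n)}_{q_{n-1}+sq_{n}}=f^{\,q_{n-1}+sq_{n}}\big(\Delta^{(n)}_{0}\big).
$$
Hence \eqref{J1} is equivalent to saying that $(\xi,s)\mapsto f^{\,q_{n-1}+sq_{n}}(\xi)$ is a bijection from $\{(\xi,s):\xi\in\Xi_{m}\cap\Delta^{(n)}_{0}\setminus\{\xi_{q_{n}}\},\ 0\le s\le k_{n+1}-1\}$ onto $\Xi_{m}\cap\check\Delta^{(n-1)}_{0}$. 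On the level of orbit indices the essential points are: (a) every $\xi_{i}\in\Xi_{m}\cap\Delta^{(n)}_{q_{n-1}+sq_{n}}$ has $i-q_{n-1}-sq_{n}\ge 0$, so that its preimage lies in $\Xi_{m}\cap\Delta^{(n)}_{0}$; and (b) every $\xi_{l}\in\Xi_{m}\cap\Delta^{(n)}_{0}$ has $l+q_{n+1}\le q_{m}+q_{m-1}$, so that, since $q_{n-1}+sq_{n}\le q_{n+1}-q_{n}$, each of its $k_{n+1}$ images is still in $\Xi_{m}$. Injectivity of the map and the fact that removing $\xi_{q_{n}}$ deletes precisely the image $\xi_{q_{n+1}}$ then follow from the first standard fact applied to $\mathcal P_{n+1}$ and the inequality $q_{n-1}+sq_{n}<q_{n+1}$.

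For (a), applying $f^{-(q_{n-1}+sq_{n})}$ reduces it to: $\xi_{-t}\notin\Delta^{(n)}_{0}$ for $1\le t\le q_{n+1}-1$, i.e.\ $\xi_{0}\notin\Delta^{(n)}_{t}$; and this is clear, since $\Delta^{(n)}_{t}$ with $1\le t\le q_{n+1}-1$ is a $\mathcal P_{n+1}$-element whose vertices $\xi_{t},\xi_{t+q_{n}}$ have positive orbit index, while $\xi_{0}\in\Xi_{n+1}$ cannot be interior to it. This already gives the inclusion $\Xi_{m}\cap\check\Delta^{(n-1)}_{0}\subseteq$(right side of \eqref{J1}); the one point needing attention is that a preimage equal to $\xi_{q_{n}}$ must be rewritten through $(\xi_{0},s+1)$, which is legitimate because then $s\le k_{n+1}-2$.

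Claim (b) is the core of the proof and contains the index part of the ``Furthermore'' statement. Here I would first locate the last vertex $\xi^{\ast}_{m}:=\xi_{q_{m}+q_{m-1}}$ of $\Xi^{\ast}_{m}$: applying \eqref{partition} at level $m$ with $i=0$ identifies $\xi^{\ast}_{m}=\xi_{q_{m-1}+q_{m}}$ as the far endpoint of the first slice $\Delta^{(m)}_{q_{m-1}}\subseteq\Delta^{(m-1)}_{0}$, and the nesting $\Delta^{(m-1)}_{0}\subseteq\widehat\Delta^{(m-2)}_{0}\subseteq\cdots\subseteq\widehat\Delta^{(n)}_{0}$ (valid because $m\ge n+1$) then shows $\xi^{\ast}_{m}\in\widehat\Delta^{(n)}_{0}=\Delta^{(n)}_{0}\cup\Delta^{(n+1)}_{0}$. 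If (b) failed, say $l=q_{m}+q_{m-1}-t$ with $1\le t\le q_{n+1}-1$, then $\xi^{\ast}_{m}=f^{t}(\xi_{l})\in\Delta^{(n)}_{t}$; but $\Delta^{(n)}_{0},\Delta^{(n+1)}_{0}$ and $\Delta^{(n)}_{t}$ are three distinct $\mathcal P_{n+1}$-elements, so $\mathrm{int}\,\Delta^{(n)}_{t}$ is disjoint from $\widehat\Delta^{(n)}_{0}$, forcing $\xi^{\ast}_{m}$ to coincide with a vertex $\xi_{t}$ or $\xi_{t+q_{n}}$ of $\Delta^{(n)}_{t}$ — impossible, since both $t$ and $t+q_{n}$ are $<q_{m}+q_{m-1}$ and the orbit is injective. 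Hence (b) holds and \eqref{J1} follows. For the ``Furthermore'' it then remains to combine (b) with the inclusion $f^{q_{n+1}}(\Delta^{(n)}_{0})\subseteq\widehat\Delta^{(n)}_{0}$: these give $\xi_{l+q_{n+1}}=f^{q_{n+1}}(\xi_{l})\in\widehat\Delta^{(n)}_{0}$ and $l+q_{n+1}\le q_{m}+q_{m-1}$, i.e.\ $\xi_{l+k_{n+1}q_{n}+q_{n-1}}=\xi_{l+q_{n+1}}\in\Xi^{\ast}_{m}\cap\widehat\Delta^{(n)}_{0}$ (the hypothesis $\xi_{l}\ne\xi_{q_{n}}$ is not actually needed for this).

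The step I expect to be the genuine obstacle is (b): this is where the purely combinatorial properties of $\mathcal P_{n+1}$ have to be matched against the arithmetic constraint that every index in $\Xi_{m}$ is $<q_{m}+q_{m-1}$, and everything turns on correctly locating $\xi^{\ast}_{m}=\xi_{q_{m}+q_{m-1}}$ inside the small neighbourhood $\widehat\Delta^{(n)}_{0}$ of $\xi_{0}$. If one prefers to avoid the nesting and first-return facts, (b) can instead be obtained by induction on $m\ge n+1$: the base case $m=n+1$ is immediate because $\Delta^{(n)}_{0}\in\mathcal P_{n+1}$ forces $\Xi_{n+1}\cap\Delta^{(n)}_{0}=\{\xi_{0},\xi_{q_{n}}\}$, and the inductive step reads off from \eqref{partition} at level $m$ the new vertices of $\mathcal P_{m+1}$ lying in $\Delta^{(n)}_{0}$ and verifies their indices stay $\le q_{m+1}+q_{m}-q_{n+1}$.
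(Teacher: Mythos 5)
Your argument is correct, and it is in essence the same (and only) approach the paper alludes to: the paper's proof of this lemma is a single sentence asserting that it "follows directly from the properties of dynamical partition," and your write-up simply supplies those properties explicitly — the slice decomposition of $\check{\Delta}^{(n-1)}_{0}$ coming from \eqref{partition}, the fact that no vertex of $\Xi_{n+1}$ lies in the interior of an element of $\mathcal{P}_{n+1}$, and the inclusion $f^{q_{n+1}}(\Delta^{(n)}_{0})\subseteq\widehat{\Delta}^{(n)}_{0}$ underlying the Poincar\'e map. In particular your step (b), locating $\xi_{q_{m}+q_{m-1}}$ inside $\widehat{\Delta}^{(n)}_{0}$ to bound the admissible indices $l$, is exactly the index bookkeeping the paper leaves implicit, and it is carried out correctly.
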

\begin{proof}
The proof of the lemma follows directly from the properties
of dynamical partition.
\end{proof}
\subsection{Circle diffeomorphisms with a break and Zygmund class}
We recall the following definition.
\begin{defn}
$f:\mathbb{S}^{1}\rightarrow \mathbb{S}^{1}$ is called  a circle diffeomorphism  with a single break point
 $\xi_{0}$ if the following
conditions are satisfied:
 \begin{itemize}
   \item [(i)] $f\in C^{1}([\xi_{0}, \xi_{0}+1]);$
   \item [(ii)] $\inf_{\xi\neq \xi_{0}} Df(\xi)>0;$
   \item [(iii)] $f$ has  one-sided derivatives $Df(\xi_{0}\pm 0)>0$ and
   $$
   c:=c_{f}(\xi_{0})=\sqrt{\frac{Df(\xi_{0}-0)}{Df(\xi_{0}+0)}}\neq 1.
   $$
 \end{itemize}
\end{defn}
The number $c$ is called the \emph{size of break} of $f$ at  $\xi_{0}.$
Circle diffeomorphisms with a break  were first studied by Khanin \& Vul
in \cite{KV1991}. It was proven  that the renormalizations
circle diffeomorphisms with a break approximate fractional linear transformations.
Next we define a  class of circle diffeomorphisms
 with breaks satisfying a Zygmund condition.
  Consider the function  $\mathcal{Z}_{\gamma}:[0,1)\rightarrow [0, +\infty)$ defined
  as
  $$
  \mathcal{Z}_{\gamma}(x)=|\log x|^{-\gamma}, \,\,\,\,\text{for}\,\,\,x\in (0,1)
  $$
and $\mathcal{Z}_{\gamma}(0)=0,$ where $\gamma>0.$
 Let $f$ be a circle diffeomorphism
with the break point $\xi_{0}.$  Denote by $\nabla^{2}f(\xi, \tau)$
the \emph{second symmetric difference} of $Df,$ that is
$$
\nabla^{2}f(\xi, \tau)=Df(\xi+\tau)+Df(\xi-\tau)-2Df(\xi)
$$
where  $\xi \in \mathbb{S}^{1}\setminus \{\xi_{0}\}$ and  $\tau\in [0,\frac{1}{2}].$
Suppose that there exists a constant $C>0$ such that
\begin{equation}\label{ok1}
 \|\nabla^{2}f(\cdot, \tau)\|_{L^{\infty}(\mathbb{S}^{1})}\leq C\tau\mathcal{Z}_{\gamma}(\tau).
\end{equation}
In this work we study the class of circle diffeomerphisms $f$  with  break point $\xi_{0},$
 whose  derivatives $Df$ have bounded variation and
satisfy  the  inequality (\ref{ok1}). We denote this class by
$\mathrm{D}^{1+\mathcal{Z}_{\gamma}}(\mathbb{S}^{1}\setminus \{\xi_{0}\}).$
\begin{rem}
Note that the class $\mathrm{D}^{1+\mathcal{Z}_{\gamma}}(\mathbb{S}^{1}\setminus \{\xi_{0}\})$
is bigger than $C^{2+\epsilon}(\mathbb{S}^{1}\setminus \{\xi_{0}\})$ for any positive
 $\gamma$ and $\epsilon.$
\end{rem}

\subsection{Statement of the main theorem}
In this section we formulate our main theorem.
For this, let us first define some necessary facts.
Let $m\in \mathbb{N}.$
Define
$$
\mathfrak{D}^{(1)}_{m}=\{c\in \mathbb{R}_{+}\setminus \{1\}: \,\,\, c^{4m}-c^{2}<1\};
\,\,\,\,\,\,\,
\mathfrak{D}^{(2)}_{m}=\{c\in \mathbb{R}_{+}\setminus \{1\}: \,\,\, c^{4m+2}+c^{4m}>1\}.
$$
The following is our main theorem.

\begin{thm}\label{Main}
Let $\gamma> 2$ and $m\in \mathbb{N}.$
Let $f$ and $\tilde{f}$ be two circle diffeomorphisms
with a break   satisfying the following conditions:
 \begin{itemize}
 \item[(a)] $f, \tilde{f} \in \mathrm{D}^{1+\mathcal{Z}_{\gamma}}(\mathbb{S}^{1}\setminus \{\xi_{0}\});$
 \item[(b)] $f$ and $\tilde{f}$ have the same irrational rotation number $\rho$
 of bounded type  such that $s(\rho)=m;$
 \item[(c)] $f$ and $\tilde{f}$ have the same size of the break $c\in \mathbb{R}_{+}\setminus \{1\};$
 \item[(d)] $c\in \mathfrak{D}^{(1)}_{m}$ in case of $c>1$ or
 $c\in \mathfrak{D}^{(2)}_{m}$ in case of $0<c<1.$
 \end{itemize}
  Then there exists a $C^{1}$-smooth circle diffeomorphism $h$  and a constant $A>0$
 such that $h\circ f= \tilde{f}\circ h$ and
$$
|Dh(x)-Dh(y)|\leq A \omega_{\gamma}(|x-y|)
$$
for any $x, y \in \mathbb{S}^{1}$ such that  $x\neq y.$
  \end{thm}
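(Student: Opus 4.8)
The overall strategy follows the renormalization approach to rigidity of circle maps with a break. First I would pass to the two families of renormalizations $f_n$ and $\tilde f_n$ associated to $f$ and $\tilde f$, rescaled on the $n$th fundamental segments $\Delta_0^{(n)}$. The starting point is the cohomological equation machinery of Section 3: for a break‑equivalent diffeomorphism one writes $D(f^{q_n})$ on $\Delta_0^{(n)}$ as a product of a ``fractional‑linear'' piece (coming from the break) and a piece that is controlled by the Zygmund condition (\ref{ok1}). The hypothesis $\gamma>2$ is what makes $\tau\,\mathcal Z_\gamma(\tau)$ summable along the geometric scales $|\Delta_0^{(n)}|\asymp\lambda^{n}$ in a way that produces a modulus of continuity of the form $\omega_\gamma(\delta)=|\log\delta|^{-(\gamma/2-1)}$ rather than a Hölder modulus; the exponent $\gamma/2-1$ appears because one loses a factor of $2$ when the Zygmund (second‑difference) bound is integrated twice to control $D\log Df$‑type quantities.

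The second ingredient is the exponential convergence of renormalizations. Using the universal bounds on the ratios $|\Delta_i^{(n)}|/|\Delta_j^{(n)}|$ from Section 4 (the analogue of the Denjoy–Koksma / cross‑ratio estimates in this class), together with condition (d) — i.e. $c\in\mathfrak D^{(1)}_m$ or $c\in\mathfrak D^{(2)}_m$ — one obtains that $\|f_n-\tilde f_n\|_{C^1}\to 0$, and in fact at a controlled rate. Here the bounded‑type hypothesis $s(\rho)=m$ is essential: it bounds the number $k_{n+1}+1$ of level‑$n$ intervals inside a level‑$(n-1)$ interval in (\ref{partition}), so that Lemma~\ref{lemma1} lets one propagate estimates across a single renormalization step with a uniform constant, and the set inclusions defining $\mathfrak D^{(1)}_m,\mathfrak D^{(2)}_m$ are precisely the inequalities on $c$ that guarantee contraction of the relevant transfer operator at step $m$. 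I would quantify ``closeness of rescaled points'' (Sections 5–6): the orbit points $\xi_l$ inside $\Delta_0^{(n)}$, after rescaling, are within $\omega_\gamma(\lambda^{n})$‑order of the corresponding rescaled points for $\tilde f$.

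With these in hand the conjugacy $h$ is built in the standard way. One defines $h$ first on the dense set $\bigcup_n\Xi_n$ by sending $\xi_i=f^i(\xi_0)$ to $\tilde\xi_i=\tilde f^i(\tilde\xi_0)$, and checks that $h$ extends to a circle homeomorphism with $h\circ f=\tilde f\circ h$. To get $h\in C^1$ and the modulus estimate on $Dh$, I would compare, for two nearby points $x,y$ lying in a common level‑$n$ element of the dynamical partition (with $n\asymp|\log|x-y||$), the ratio $|h(x)-h(y)|/|x-y|$ with the ratio $|\Delta_0^{(n)}|^{\sim}/|\Delta_0^{(n)}|$ of corresponding partition elements; the renormalization convergence and the distortion bounds give that this ratio has a limit $Dh$ and that $|Dh(x)-Dh(y)|$ is bounded by a sum of the one‑step errors over scales between $|x-y|$ and $1$, which telescopes to $A\,\omega_\gamma(|x-y|)$. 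Standard arguments (e.g. the real‑bounds / a.e. differentiability plus the functional equation) upgrade a.e. differentiability to genuine $C^1$.

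The main obstacle, I expect, is the first ingredient: establishing the exponential (or at least suitably fast) convergence of renormalizations in $C^1$ under only the Zygmund condition (\ref{ok1}) rather than $C^{2+\alpha}$. In the $C^{2+\alpha}$ theory one has a genuine Hölder bound on $D\log Df$, which feeds directly into the contraction estimates for the renormalization operator; here one only has the weaker, scale‑dependent bound $\tau\mathcal Z_\gamma(\tau)$, so the key technical work is to show that the accumulated nonlinearity along an orbit of length $q_n$ inside $\Delta_0^{(n-1)}$ still decays, and to track the resulting loss of smoothness precisely enough to land on $\omega_\gamma$. This is exactly where $\gamma>2$ and the explicit break‑size conditions (d) are used, and where the estimates of Sections 3–6 of the paper do the real work.
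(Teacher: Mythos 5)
Your plan rests on proving exponential (or at least ``controlled-rate'') convergence of the renormalizations in this Zygmund class, with condition (d) allegedly furnishing ``contraction of the relevant transfer operator''; that is not how the argument goes, and it is where your proposal has a genuine gap. The paper does not prove any new convergence of renormalizations: it imports Theorem~\ref{renormcon} from \cite{Renorm}, which gives only the polynomial rate $\|f_n-\tilde f_n\|_{C^1([-1,0])}\le Cn^{-\gamma}$, and condition (d) plays no role there at all. The real new difficulty, which your sketch passes over, is converting $C^1$-closeness of renormalizations into closeness of the \emph{rescaled orbit points}, i.e.\ a bound on $\mathfrak d_n(\xi)$ uniform over $\widehat\Delta_0^{(n-1)}$, and hence on $\Lambda_n=\max|\log Df^{q_n}-\log D\tilde f^{q_n}\circ h|$. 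Propagating closeness from level $\ell$ down to level $n$ obeys a recursion of the form $\mathfrak q_n\le\kappa\,\mathfrak q_{n+1}+Cn^{-\gamma}$ with $\kappa=\mathfrak c^{2m}>1$, so errors are \emph{amplified} at each step; Lemma~\ref{Closeness} therefore only tracks points of $\Xi^*_\ell$ within a logarithmic window $\ell-n\le[\alpha\log_\kappa n]$, and an arbitrary point is handled by paying the length of its level-$\bigl(n+[\tfrac{\gamma}{2}\log_\kappa n]\bigr)$ interval, which by Lemma~\ref{comporision} is of order $\lambda^{(\gamma/2)\log_\kappa n}$ with $\lambda=\sqrt{\mathfrak c^2/(\mathfrak c^2+1)}$. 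Condition (d) is exactly the arithmetic inequality $\lambda^{-1}>\kappa$ that makes this contribution $\le n^{-\gamma/2}$; it is not a spectral hypothesis on a renormalization operator. For the same reason your heuristic that the exponent $\gamma/2-1$ arises from ``integrating the second-difference bound twice'' is off target: $\gamma/2$ is the optimal choice of $\alpha$ balancing the $\kappa$-amplification against the $\lambda$-decay of intervals, and the extra $-1$ comes from summing the tail $\sum_{\ell\ge n}\ell^{-\gamma/2}$.

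The endgame also differs from what you describe, although your telescoping over scales is close in spirit. With Lemma~\ref{Lambda} giving $\Lambda_n\le Cn^{-\gamma/2}$, the hypotheses $\gamma>2$ and $s(\rho)=m$ yield $\sum_n k_{n+1}\Lambda_n<\infty$, so Theorem~\ref{criteria} produces a continuous solution $\zeta$ of the cohomological equation (\ref{F0a}); the paper then shows $Dh=\beta e^{\zeta}$ on the orbit of $\xi_0$ and extends by continuity, which is what gives $C^1$-smoothness (rather than an a.e.-differentiability upgrade), and the modulus follows from $|\log Dh(\xi_j)-\log Dh(\xi_i)|\le C\sum_{\ell\ge n}k_{\ell+1}\Lambda_\ell\le Cn^{-(\gamma/2-1)}$ together with $n\asymp\bigl|\log|\xi_j-\xi_i|\bigr|$ from Lemma~\ref{comporision}. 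Without the quantitative bound on $\mathfrak d_n$ and $\Lambda_n$ none of these steps closes; that bound, not renormalization convergence itself, is the actual content of the proof and the only place where $\gamma>2$, the bounded-type condition and (d) interact.
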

  \begin{rem}
The reason for the restriction $c$ in condition $(d)$ is purely technical.
 It enables us to get an algebraic estimate for the ratio of lengths of segments
 $\Delta^{n+\ell}$ and $\Delta^{n}$ satisfying $\Delta^{n+\ell}\subset \Delta^{n}$ of the dynamical partition $\mathcal{P}_{n}$  while $\ell$ has a form of the logarithm of $n.$
We do not know if the statement of Theorem \ref{Main} holds  when the restriction
is removed.

  \end{rem}



\section{Cohomological equation for the break-equivalent diffeomorphisms}
In this section we show the existence of a solution of a cohomological equation
for the break-equivalent diffeomorphisms.
We begin from the following definition.
\begin{defn}\label{break-equ}
We say that two circle diffeomorphisms  $f$ and $\tilde{f}$  with a
break $\xi_{0}$ are break-equivalents if
there exists a topological conjugacy $h$
such that $h(\xi_{0})=\xi_{0}$ and $c_{f}(\xi_{0})=c_{\tilde{f}}(h((\xi_{0}))).$
\end{defn}
Consider  two break-equivalent circle diffeomorphisms
$f$ and $\tilde{f}$  with irrational rotation number.
Let $h:\mathbb{S}^{1}\rightarrow \mathbb{S}^{1}$ be the conjugacy between $f$ and $\tilde{f},$ that is,
\begin{equation}\label{F0}
h\circ f= \tilde{f}\circ h.
\end{equation}
The \emph{cohomological equation}
associated to (\ref{F0}) is
\begin{equation}\label{F0a}
\zeta \circ f-\zeta=\log D\tilde{f}\circ h-\log Df
\end{equation}
where $\zeta: \mathbb{S}^{1}\rightarrow \mathbb{R}$
is called the solution of (\ref{F0a}) if it exists.
Note that here $D\tilde{f}(h(x))$ means the derivative of
$\tilde{f}$ at $h(x).$   Define
$$
\Lambda_{n}(x)=\log Df^{q_{n}}(x)-\log D\tilde{f}^{q_{n}}(h(x)),
 \,\,\,\,\, x\in \widehat{\Delta}^{(n-1)}_{0}.
$$
 Since $f$ and $\tilde{f}$ are break-equivalents
one-side limits of $\Lambda_{n}$ at the break point $\xi_{0}$ are equal that is,
$\Lambda_{n}(\xi_{0}-0)=\Lambda_{n}(\xi_{0}+0).$
Therefore $\Lambda_{n}$ is continuous on  $\widehat{\Delta}^{(n-1)}_{0}$
and it can be decomposed as
$$
\Lambda_{n}(x)=\sum_{s=0}^{q_{n}-1}\log Df(f^{s}(x))-\log D\tilde{f}(h\circ f^{s}(x)),
\,\,\,\,\,\,\,\, x\in \widehat{\Delta}^{(n-1)}_{0}.
$$
Denote $\Lambda_{n}=\max_{x\in \widehat{\Delta}^{(n-1)}_{0}}|\Lambda_{n}(x)|.$
The following theorem will be used in the proof of main theorem.
\begin{thm}\label{criteria}
Let $f$ and $\tilde{f}$ be two break-equivalent circle diffeomorphisms with a break
and  with identical irrational rotation number
$\rho=[k_{1},k_{2},...,k_{n},...].$
If
$$
\sum_{n=0}^{\infty}k_{n+1}\Lambda_{n}<\infty
$$
then the cohomological equation (\ref{F0a}) has a continuous solution.
\end{thm}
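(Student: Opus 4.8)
The plan is to construct the solution $\zeta$ explicitly on the fundamental segments of the dynamical partitions and show that the telescoping series defining it converges uniformly, using the hypothesis $\sum_n k_{n+1}\Lambda_n<\infty$ as the summability condition. The key observation is the standard "cocycle" identity: if $\zeta$ solves \eqref{F0a}, then for any $N$ one has $\zeta(f^N x)-\zeta(x)=\sum_{s=0}^{N-1}\bigl(\log D\tilde f(h\circ f^s(x))-\log Df(f^s(x))\bigr)$, and in particular along the closest-return times $q_n$ this partial sum equals $-\Lambda_n(x)$. Conversely, one can try to \emph{define} $\zeta$ by prescribing its values on the orbit $\{\xi_i=f^i(\xi_0)\}$ via these partial sums and then interpolating. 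Concretely, I would set $\zeta(\xi_0)=0$ and, for a point $\xi_i$ in the $n$th dynamical partition, write $i$ in the "base-$q$" expansion dictated by the partition \eqref{partition} and add up the corresponding blocks of $-\Lambda_k$-type increments; the point is that passing from $\mathcal P_n$ to $\mathcal P_{n+1}$ adds to the value of $\zeta$ at any $\xi_i$ an increment that is a sum of at most $k_{n+1}$ terms, each bounded by $\Lambda_n$, hence bounded by $k_{n+1}\Lambda_n$. Summing over $n$ gives a uniformly Cauchy sequence of candidate functions, whose limit is the desired $\zeta$ on the dense set $\bigcup_n \Xi_n$.

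The next step is to promote $\zeta$ from the countable dense set $\bigcup_n\Xi_n$ to a genuinely continuous function on all of $\mathbb S^1$. For this I would control the oscillation of the $n$th approximant on each element of $\mathcal P_n$. On a fundamental segment $\Delta_0^{(n)}$ (or $\Delta_0^{(n-1)}$), the difference between $\zeta$-values at the two endpoints, computed through the cocycle, is governed by $\Lambda_n$ up to a bounded multiplicative constant coming from the finiteness of the distortion of $f^{q_n}$ on such segments (a Denjoy–Koksma / bounded-variation type estimate, available since $Df$ has bounded variation — this is exactly the class $\mathrm D^{1+\mathcal Z_\gamma}$). Since $\Lambda_n\to 0$ (which follows from $\sum k_{n+1}\Lambda_n<\infty$ and $k_{n+1}\ge 1$) and the partitions $\mathcal P_n$ refine to points, the modulus of continuity of the partial sums on partition elements tends to zero; combined with the uniform Cauchy property this yields a continuous limit $\zeta:\mathbb S^1\to\mathbb R$. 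One must also check that the construction is consistent — i.e. the value assigned to a given $\xi_i$ does not depend on which $n$ with $\xi_i\in\Xi_n$ we use — which again reduces to the refinement relation \eqref{partition} and the additive bookkeeping of the increments.

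Finally I would verify that the function so constructed actually satisfies \eqref{F0a}. Both sides of \eqref{F0a} are continuous (the right-hand side because $f,\tilde f\in C^1$ away from the break and the one-sided jumps match by break-equivalence, so $\log D\tilde f\circ h-\log Df$ is continuous on $\mathbb S^1$), so it suffices to check the identity on the dense set $\bigcup_n\Xi_n$, where it holds by construction: $\zeta(f\xi_i)-\zeta(\xi_i)$ was defined to be precisely the single increment $\log D\tilde f(h(\xi_i))-\log Df(\xi_i)$. The main obstacle is the bookkeeping in the second paragraph: making the "base-$q$ expansion" of the index $i$ and the associated telescoping of $\Lambda$-increments precise enough to get the clean bound $k_{n+1}\Lambda_n$ for the $n$th correction, and simultaneously the right oscillation estimate on partition elements, so that uniform convergence to a \emph{continuous} function is genuinely established rather than merely pointwise convergence on a dense set. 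Everything else is a routine application of the bounded-distortion properties of the maps in the class $\mathrm D^{1+\mathcal Z_\gamma}(\mathbb S^1\setminus\{\xi_0\})$ and the combinatorics of the dynamical partition recorded in Lemma \ref{lemma1} and \eqref{partition}.
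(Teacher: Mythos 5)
Your strategy is in essence the paper's: the approximants $\zeta_{n}(x)=\sum_{s=0}^{i_{n}(x)-1}\bigl(\log Df(f^{s}(x))-\log D\tilde f(h\circ f^{s}(x))\bigr)$, with $i_{n}$ the first-entrance time into $\widehat{\Delta}^{(n-1)}_{0}$, are exactly your ``add up the blocks dictated by \eqref{partition}'' construction, and the paper's bound $\|\zeta_{n+1}-\zeta_{n}\|_{\infty}\leq k_{n}\Lambda_{n-1}+k_{n+1}\Lambda_{n}$ is your per-level increment estimate. But what you defer as ``bookkeeping'' is the entire proof, and your sketch omits the one point that makes the bound true: $\Lambda_{n}$ is defined as a maximum over the renormalization neighborhood $\widehat{\Delta}^{(n-1)}_{0}$ only, so a length-$q_{n}$ block of the cocycle sum is controlled by $\Lambda_{n}$ only if its starting point lies in $\widehat{\Delta}^{(n-1)}_{0}$. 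Verifying that the decomposition really produces at most $k_{n+1}$ such blocks, all starting inside $\widehat{\Delta}^{(n-1)}_{0}$, is precisely the paper's three-case analysis of $i_{n+1}-i_{n}$ (generic points, points of $\Xi_{n}$, points of $\Xi_{n+1}\setminus\Xi_{n}$); that analysis also produces an extra admissible term involving $\Lambda_{n-1}$ for the indices $1\leq i\leq q_{n-1}$, which your clean $k_{n+1}\Lambda_{n}$ bound does not see. Without this the asserted increment estimate is unsupported, and your consistency check for the orbit values is exactly this same case analysis in disguise.

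Second, your continuity step leans on a tool you neither need nor have: Theorem \ref{criteria} is stated for arbitrary break-equivalent circle diffeomorphisms with a break, with no bounded-variation or Zygmund hypothesis, so a Denjoy--Koksma/distortion estimate is not available from the statement --- and it is not needed. The difference of cocycle values at the endpoints of $\Delta^{(n)}_{0}$ is exactly $-\Lambda_{n}(\xi_{0})$, and the oscillation of $\zeta$ over orbit points lying in a single element of $\mathcal{P}_{n}$ is bounded by the tail $C\sum_{\ell\geq n}k_{\ell+1}\Lambda_{\ell}$; note that $\Lambda_{n}\to 0$ alone is not enough, you need this summable tail, which is where the hypothesis enters. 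Finally, continuity at the orbit points themselves (in particular at $\xi_{0}$) is not automatic from ``partition elements shrink'': the paper proves continuity at $\xi_{0}$ directly from $\zeta_{j}\equiv 0$ on $\widehat{\Delta}^{(n-1)}_{0}$ for $j\leq n$ together with the tail bound, propagates it along $\Xi$ using equation \eqref{F0a}, and handles $\mathbb{S}^{1}\setminus\Xi$ by uniform convergence of approximants that are continuous on the interiors of partition elements; some argument of this kind (equivalently, an honest modulus of continuity on the dense orbit) must replace your appeal to distortion before the extension to a continuous solution is justified.
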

\begin{proof}
Let $i_{n}:\mathbb{S}^{1}\rightarrow \mathbb{N}_{0}$ be the first entrance time of $x$
in $\widehat{\Delta}^{(n-1)}_{0};$ that is,
$$
i_{n}(x)=\min \{i\geq 0:\, f^{i}(x)\in \widehat{\Delta}^{(n-1)}_{0}\}.
$$
Define $\zeta_{n}:\mathbb{S}^{1}\rightarrow \mathbb{R}$ as follows
$$
\zeta_{n}(x)=\sum_{s=0}^{i_{n}(x)-1}\log Df(f^{s}(x))-\log D\tilde{f}(h\circ f^{s}(x)).
$$
 Next we show that $\zeta_{n}$ is a Cauchy.
 For this,  first we estimate
$\|\zeta_{n+1}-\zeta_{n}\|_{\infty}.$
To estimate this we distinguish the following three cases: \\
\emph{Case I.} Suppose $x\in \mathbb{S}^{1}\setminus \Xi_{n+1}.$
By the definition of $i_{n}$ we have
\[i_{n}(x)=\left\{\begin{array}{ll}
0, & \mbox{if \,\,$x\in \widehat{\Delta}^{(n-1)}_{0}$}\\
q_{n-1}-j, & \mbox{if \,\,$x\in \Delta^{(n)}_{j} $}\\
q_{n}-i, & \mbox{if \,\,$x\in \Delta^{(n-1)}_{i}$}
\end{array}\right.\]
where $0<j<q_{n-1}$  and $0<i<q_{n}.$ Using the properties of dynamical
partition we can show that
\[i_{n+1}(x)-i_{n}(x)=\left\{\begin{array}{ll}
0, & \mbox{if \,\,$x\in \widehat{\Delta}^{(n)}_{0}\cup \Delta^{(n+1)}_{i}$}\\
k_{n+1}q_{n}, & \mbox{if \,\,$x\in \Delta^{(n)}_{j}$}\\
(k_{n+1}-\ell-1)q_{n}, & \mbox{if \,\,$x\in \Delta^{(n)}_{i+q_{n-1}+\ell q_{n}}$}
\end{array}\right.\]
where $0<j<q_{n-1},$ $0<i<q_{n}$ and $0\leq \ell <k_{n+1}.$ Therefore
$|\zeta_{n+1}(x)-\zeta_{n}(x)|=0$ if  $x\in \widehat{\Delta}^{(n)}_{0}\cup \Delta^{(n+1)}_{i},$
$0<i<q_{n}$ and
\begin{equation}\label{F1}
\begin{split}
  |\zeta_{n+1}(x)-\zeta_{n}(x)| &= \Big|\sum_{s=i_{n}(x)}^{i_{n+1}(x)-1}
  \log Df(f^{s}(x))-\log D\tilde{f}(h\circ f^{s}(x))\Big| \\
  &=\Big|\sum_{s=0}^{i_{n+1}(x)-i_{n}(x)-1}
  \log Df(f^{s}(x_{i_{n}})-\log D\tilde{f}(h\circ f^{s}(x_{i_{n}}))\Big| \\
  &\leq \Big|\sum_{s=0}^{q_{n}-1}
  \log Df(f^{s}(x_{i_{n}})-\log D\tilde{f}(h\circ f^{s}(x_{i_{n}}))\Big| \\
  &+\Big|\sum_{s=q_{n}}^{2q_{n}-1}
  \log Df(f^{s}(x_{i_{n}})-\log D\tilde{f}(h\circ f^{s}(x_{i_{n}}))\Big| \\
  &\vdots\\
  &+\Big|\sum_{s=i_{n+1}(x)-i_{n}(x)-q_{n}}^{i_{n+1}(x)-i_{n}(x)-1}
  \log Df(f^{s}(x_{i_{n}})-\log D\tilde{f}(h\circ f^{s}(x_{i_{n}}))\Big|
   \end{split}
\end{equation}
if $x\in \Delta^{(n)}_{j},$ $0<j<q_{n+1}$ where $x_{i_{n}}=f^{i_{n}(x)}(x).$
Clearly $f^{i_{n}}$ maps $\mathbb{S}^{1}$ into $\widehat{\Delta}^{(n-1)}_{0}$
 and the points  $x_{i_{n}}, f^{q_{n}}(x_{i_{n}}),..., f^{i_{n+1}(x)-i_{n}(x)-q_{n}}(x_{i_{n}})$
lie in the interval $\widehat{\Delta}^{(n-1)}_{0}.$ Therefore the right hand side of
(\ref{F1}) can be estimated as follows
$$
|\zeta_{n+1}(x)-\zeta_{n}(x)|\leq \sum_{s=0}^{i_{n+1}(x)-i_{n}(x)-q_{n}}\Big|\Lambda_{n}\Big(f^{sq_{n}}(x_{i_{n}})\Big)\Big|
\leq k_{n+1}\Lambda_{n}.
$$
Hence
\begin{equation}\label{F2}
\|\zeta_{n+1}-\zeta_{n}\|_{\infty}\leq k_{n+1}\Lambda_{n}.
\end{equation}
\emph{Case II.} Suppose $x=\xi_{i}\in \Xi_{n}.$
For $i=0,$ it is clear that $|\zeta_{n+1}(\xi_{0})-\zeta_{n}(\xi_{0})|=0.$
For $i\geq 1,$ one can easily see
\[i_{n}(\xi_{i})=\left\{\begin{array}{ll}
q_{n-1}-i, & \mbox{if \,\,$1\leq i \leq q_{n-1}$}\\
q_{n}-i, & \mbox{if \,\,$q_{n-1}< i \leq q_{n}$}\\
q_{n}+q_{n-1}-i, & \mbox{if \,\,$q_{n}<i<q_{n}+q_{n-1}.$}
\end{array}\right.\]
Consequently, we get
\[i_{n+1}(\xi_{i})=\left\{\begin{array}{ll}
q_{n}-i, & \mbox{if \,\,$1\leq i \leq q_{n}$}\\
q_{n+1}-i, & \mbox{if \,\,$q_{n}< i < q_{n}+q_{n-1}.$}
\end{array}\right.\]
Therefore
\[i_{n+1}(\xi_{i})-i_{n}(\xi_{i})=\left\{\begin{array}{ll}
q_{n}-q_{n-1}, & \mbox{if \,\,$1\leq i \leq q_{n-1}$}\\
0, & \mbox{if \,\,$q_{n-1}< i \leq q_{n}$}\\
(k_{n+1}-1)q_{n}, & \mbox{if \,\,$q_{n}<i<q_{n}+q_{n-1}.$}
\end{array}\right.\]
This and by the definition of $\zeta_{n}$ we have
$|\zeta_{n+1}(\xi_{i})-\zeta_{n}(\xi_{i})|=0$ if $q_{n-1}< i \leq q_{n},$
and
\begin{equation}\label{F3}
|\zeta_{n+1}(\xi_{i})-\zeta_{n}(\xi_{i})|=
|\Lambda_{n}(\xi_{0})-\Lambda_{n-1}(\xi_{0})|\leq
\Lambda_{n}+\Lambda_{n-1}
\end{equation}
if $1\leq i \leq q_{n-1}$ and
\begin{equation}\label{F4}
|\zeta_{n+1}(\xi_{i})-\zeta_{n}(\xi_{i})|=
\Big|\sum_{s=1}^{k_{n+1}-1}\Lambda_{n}(\xi_{s q_{n}+q_{n-1}})\Big|\leq
k_{n+1}\Lambda_{n}.
\end{equation}
if $q_{n}<i <q_{n}+ q_{n-1}.$\\
\\
\emph{Case III.} Suppose  $x=\xi_{i}\in \Xi_{n+1}\setminus \Xi_{n}.$
In this case we consider the following sub-cases:
\begin{align*}
a)\,\,\, i\in & L_{n}:=\{\ell q_{n}+q_{n-1}, \, 1\leq \ell <k_{n+1}\}, &
b)\,\,\, i\in &(q_{n}+q_{n-1}, q_{n+1})\setminus L_{n},\\
c)\,\,\,i= & q_{n+1},  & d)\,\,\,i\in &(q_{n+1}, q_{n+1}+q_{n}).
\end{align*}
It is easy to check that $i_{n}(\xi_{i})=0$ and
$i_{n+1}(\xi_{i})=(k_{n+1}-\ell)q_{n}$
in the sub-case of $a).$
Thus one gets
\begin{equation}\label{F5}
|\zeta_{n+1}(\xi_{i})-\zeta_{n}(\xi_{i})|=
\Big|\sum_{s=\ell}^{k_{n+1}-1}\Lambda_{n}(\xi_{s q_{n}+q_{n-1}})\Big|\leq
k_{n+1}\Lambda_{n}.
\end{equation}
Consider the sub-case  $b).$
It is clear that $i$ can be written as
$i=\ell_{1}q_{n}+q_{n-1}+i_{1}$ for some $1\leq \ell_{1} <k_{n+1}$ and $1\leq i_{1} <q_{n}.$
By the definition of $i_{n}$ we have
$i_{n}(\xi_{i})=q_{n}-i_{1}$ and $i_{n+1}(\xi_{i})=q_{n+1}-i=
(k_{n+1}-\ell_{1})q_{n}-i_{1}.$
It implies
\begin{equation}\label{F6}
|\zeta_{n+1}(\xi_{i})-\zeta_{n}(\xi_{i})|=
\Big|\sum_{s=\ell_{1}}^{k_{n+1}-1}\Lambda_{n}(\xi_{s q_{n}+q_{n-1}})\Big|\leq
k_{n+1}\Lambda_{n}.
\end{equation}
The sub-case $c)$ is clear because of both functions
$\Lambda_{n}$ and $\Lambda_{n+1}$ are zero at  $\xi_{i}.$
Finally, consider the sub-case $d).$ In this case
$i$ can be written as $i=q_{n+1}+i_{1}$ for some $1\leq i_{1}< q_{n}.$
One can easily see $i_{n}(\xi_{i})=q_{n}-i_{1}$ and $i_{n+1}(\xi_{i})=q_{n+1}+q_{n}-i
=q_{n+1}+q_{n}-(q_{n+1}+i_{1})=q_{n}-i_{1}$
which  implies
\begin{equation}\label{F7}
|\zeta_{n+1}(\xi_{i})-\zeta_{n}(\xi_{i})|=0.
\end{equation}
 Combining the inequalities (\ref{F2})-(\ref{F7}) we obtain, finally,
 \begin{equation}\label{F7a}
 \|\zeta_{n+1}-\zeta_{n}\|_{\infty}\leq k_{n}\Lambda_{n-1}+
 k_{n+1}\Lambda_{n}.
 \end{equation}
 From this it follows that
  \begin{equation}\label{F7b}
 \|\zeta_{n+p}-\zeta_{n}\|_{\infty}\leq 2\sum_{m=n}^{n+p} k_{m}\Lambda_{m-1}.
  \end{equation}
  Thus $\zeta_{n}$ is a Cauchy.
  Let $\zeta(x)=\lim_{n\rightarrow \infty}\zeta_{n}(x).$
 Next we show that
 the  function  $\zeta:\mathbb{S}^{1}\rightarrow \mathbb{R}$ is continuous and satisfies
the cohomological equation (\ref{F0a}).
First we show that $\zeta$ satisfies (\ref{F0a}).
It is easy to see that for any $x\in \mathbb{S}^{1}\setminus \{\xi_{0}\}$ there exists
$n_{0}:=n_{0}(x)$ such that $i_{n}(f(x))=i_{n}(x)-1$ for all $n\geq n_{0}.$
This and by the definition of $\zeta_{n}$ we get
$$
\zeta_{n}\circ f-\zeta_{n}=\log D\tilde{f}\circ h-\log Df
$$
for all $n\geq n_{0}.$ Taking the limit as $n\rightarrow \infty$
we get (\ref{F0a}). Let $x=\xi_{0}.$ It is easy to see that
$\zeta_{n}(\xi_{0})=0$ and
\begin{equation} \label{F9}
\begin{split}
\zeta_{n}(f(\xi_{0})) & =
\sum_{s=0}^{i_{n}(f(\xi_{0}))-1}\log Df(f^{s+1}(\xi_{0}))-\log D\tilde{f}(h\circ f^{s+1}(\xi_{0})) \\
 & = \sum_{s=0}^{q_{n-1}-2}\log Df(f^{s+1}(\xi_{0}))-\log D\tilde{f}(h\circ f^{s+1}(\xi_{0}))\\
 &=\Lambda_{n-1}(\xi_{0})+\log D\tilde{f}(h(\xi_{0}))-\log Df(\xi_{0}).
\end{split}
\end{equation}
Taking the limit as $n\rightarrow \infty$ we again get (\ref{F0a}).
Next we show that $\zeta$ is continuous at $x=\xi_{0}.$
Since $\zeta_{n}(\xi_{0})=0$ for all $n \geq 1$ we have
$\zeta(\xi_{0})=0.$ Take any $z\in \widehat{\Delta}^{(n-1)}_{0}.$
It is obvious that  $i_{j}(z)=0$ for every $j\leq n,$ so $\zeta_{j}(z)=0$
for every  $j\leq n.$ In particular
$$
\zeta_{n+p}(z)=\sum_{m=0}^{p-1} \zeta_{n+m+1}(z)-\zeta_{n+m}(z).
$$
This and relation (\ref{F7a})  imply
$$
|\zeta_{n+p}(z)|\leq 2\sum_{m=n}^{n+p} k_{m}\Lambda_{m-1}.
$$
Consequently
$$
\lim_{n\rightarrow \infty}\sup_{z\in \widehat{\Delta}^{(n-1)}_{0}}|\zeta(z)|=0.
$$
Hence $\zeta$ is continuous at $x=\xi_{0}.$ Denote by $\Xi=\{\xi_{i}:=f^{i}(\xi_{0}),\,i\in\mathbb{N}\}$
the positive trajectory of $\xi_{0}.$ Since $\zeta$ is continuous at $x=\xi_{0}$ and
$\log D\tilde{f}\circ h-\log Df$
is continuous on $\mathbb{S}^{1},$ by
$$
\zeta \circ f-\zeta=\log D\tilde{f}\circ h-\log Df
$$
it  implies that   $\zeta$ is continuous
on $\Xi.$ Note that $i_{n}:\mathbb{S}^{1}\rightarrow \mathbb{R}$ is continuous in the interior
of each element of the partition $\mathcal{P}_{n}$ for every $n\geq 1.$
As a consequence $\zeta_{n}$ is continuous  in the interior
of each element of the partition $\mathcal{P}_{n}$ for every $n\geq 1.$
Thus the limit function $\zeta$ is continuous on $x\in \mathbb{S}^{1}\setminus \Xi.$
\end{proof}
\begin{rem}
It is important to remark that Theorem \ref{criteria} holds true for
any two break-equivalent circle diffeomorphisms with any countable number of break points.
\end{rem}
\section{Renormalizations of circle diffeomorphisms  with a break}
In this section we will discuss  on convergence of renormalizations
of two circle diffeomorphisms  with a break.
Let us recall first the definition of renormalization of  circle maps.
The  segment $\widehat{\Delta}^{(n-1)}_{0}$
is called the $n^{\text{th}}$ \emph{renormalization neighborhood} of  $\xi_{0}.$
On $\widehat{\Delta}^{(n-1)}_{0}$   we define  the Poincar\'{e} map
$\pi_{n}=(f^{q_n}, f^{q_{n-1}}):\widehat{\Delta}^{(n-1)}_{0}\rightarrow \widehat{\Delta}^{(n-1)}_{0}$ as follows
\[\pi_{n}(\xi)=\left\{\begin{array}{ll}f^{q_n}(\xi), & \mbox{if \,\,
$\xi\in \Delta^{(n-1)}_{0}$},\\ f^{q_{n-1}}(\xi), & \mbox{if \,\,
$\xi\in \Delta^{(n)}_{0}$}.\end{array}\right.\]
Next we define the renormalization of $f$ as follows.
Let $\mathcal{A}_{n}:\mathbb{R}\rightarrow \mathbb{S}^{1}$ be an affine
covering map such that $\mathcal{A}_{n}([-1, 0])=\Delta_{0}^{(n-1)},$ with
$\mathcal{A}_{n}(0)=\xi_{0}$ and $\mathcal{A}_{n}(-1)=f^{q_{n-1}}(\xi_{0}).$
We define $a_{n}\in \mathbb{R}$ to be a positive number
such that $\mathcal{A}_{n}(a_{n})=f^{q_{n}}(\xi_{0}).$
 It is obvious that $\mathcal{A}_{n}:[0, a_{n}]\rightarrow \Delta^{(n)}_{0}$
and $\mathcal{A}_{n}:[-1, 0]\rightarrow \Delta^{(n-1)}_{0}.$
A pair of functions $(f_{n}, g_{n}):[-1, a_{n}]\rightarrow [-1, a_{n}]$
defined by $(f_{n}, g_{n})=\mathcal{A}^{-1}_{n}\circ \pi_{n}\circ \mathcal{A}_{n},$
is called the $n^{\text{th}}$ \emph{renormalization} of $f,$
where $\mathcal{A}^{-1}_{n}$ is the inverse branch that
maps $\widehat{\Delta}^{(n-1)}_{0}$  onto $[-1, a_{n}].$
Define the following M\"{o}bius transformation
$$
F_{n}:=F_{a_{n}, v_{n}, c_{n}}:z \rightarrow\frac{a_{n}+c_{n}z}{1-v_{n}z}
$$
where $c_{n}=c$ if $n$ is even, $c_{n}=c^{-1}$ if $n$ is odd, and
\begin{align*}
a_{n}&=\frac{|\Delta^{(n)}_{0}|}{|\Delta^{(n-1)}_{0}|},&
v_{n}&=\frac{c_{n}-a_{n}-b_{n}}{b_{n}},&
b_{n}&=\frac{|\Delta^{(n-1)}_{0}|-|\Delta^{(n)}_{q_{n-1}}|}{|\Delta^{(n-1)}_{0}|}.
\end{align*}
The following theorem has been proved in \cite{Renorm}.

\begin{thm}\label{coefbog}
Let $f \in \mathrm{D}^{1+\mathcal{Z}_{\gamma}}(\mathbb{S}^{1}\setminus \{\xi_{0}\})$ and $\gamma>1.$
Suppose the  rotation number of $f$ is irrational. There exists a constant $C=C(f)>0$ and a
 natural number $N_{0}=N_{0}(f)$  such that
\begin{align*}
\|f_{n}-F_{n}\|_{C^{1}([-1,0])}&\leq \frac{C}{n^{\gamma}},&
\|D^{2}f_{n}-D^{2}F_{n}\|_{C^{0}([-1,0])}&\leq \frac{C}{n^{\gamma-1}}
\end{align*}
for all $n\geq N_{0}.$
\end{thm}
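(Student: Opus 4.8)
The statement is the content of \cite{Renorm}; here is the plan I would follow.

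\emph{Reduction.} Since $\mathcal{A}_{n}$ is affine with $|D\mathcal{A}_{n}|=|\Delta^{(n-1)}_{0}|$, setting $\Phi_{n}:=\mathcal{A}_{n}\circ F_{n}\circ\mathcal{A}_{n}^{-1}$ one has, for $k=0,1,2$,
\[
\|D^{k}(f_{n}-F_{n})\|_{C^{0}([-1,0])}=|\Delta^{(n-1)}_{0}|^{\,k-1}\,\|D^{k}(f^{q_{n}}-\Phi_{n})\|_{C^{0}(\Delta^{(n-1)}_{0})},
\]
and a direct computation shows that $a_{n},b_{n},v_{n},c_{n}$ are chosen exactly so that $\Phi_{n}$ is the M\"obius transformation that coincides with $f^{q_{n}}$ at the endpoints $\xi_{0}$ and $f^{q_{n-1}}(\xi_{0})$ of $\Delta^{(n-1)}_{0}$ (sending them to $f^{q_{n}}(\xi_{0})$ and $f^{q_{n}+q_{n-1}}(\xi_{0})$) and reproduces the break ratio $c$. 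So it suffices to estimate $\|\log Df^{q_{n}}-\log D\Phi_{n}\|_{C^{0}}$ and the $L^{\infty}$-norm of the derivative of this difference on $\Delta^{(n-1)}_{0}$, and then reinsert the factor $|\Delta^{(n-1)}_{0}|$.

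\emph{Bounded geometry.} Since $Df$ has bounded variation, Denjoy's estimate gives a uniform bound for the distortion of every $f^{q_{n}}$ on $\widehat{\Delta}^{(n-1)}_{0}$; in particular $Df^{i}(\xi)\asymp|\Delta^{(n-1)}_{i}|/|\Delta^{(n-1)}_{0}|$ for $\xi\in\Delta^{(n-1)}_{0}$, $0\le i<q_{n}$, the coefficients $a_{n},b_{n}$ stay in a compact part of their range, and, using \eqref{partition}, the partition mesh $\max_{\Delta\in\mathcal{P}_{n}}|\Delta|$ decays exponentially in $n$. Hence $\mathcal{Z}_{\gamma}(|\Delta|)=|\log|\Delta||^{-\gamma}\le C n^{-\gamma}$ for every $\Delta\in\mathcal{P}_{n}$, while $\sum_{\Delta\in\mathcal{P}_{n}}|\Delta|=1$. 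No Diophantine hypothesis on $\rho$ is used here.

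\emph{Core comparison.} Write $\log Df^{q_{n}}(\xi)=\sum_{i=0}^{q_{n}-1}\log Df(f^{i}(\xi))$, the $i$-th summand living on $\Delta^{(n-1)}_{i}\in\mathcal{P}_{n}$; the break point $\xi_{0}$ is the endpoint of $\Delta^{(n-1)}_{0}$, so $f^{q_{n}}$ is $C^{1}$ in the interior and the break enters only through its one-sided value at $\xi_{0}$ — this is why the renormalization limit is M\"obius, not affine. Expanding each summand in the projective chart adapted to the break and using the distortion bounds, the first-order parts telescope and reproduce $\log D\Phi_{n}$ exactly, while the residual in summand $i$ is bounded, via \eqref{ok1}, by $C|\Delta^{(n-1)}_{i}|\,\mathcal{Z}_{\gamma}(|\Delta^{(n-1)}_{i}|)$, i.e.\ by the \emph{second symmetric difference} of $Df$ at scale $|\Delta^{(n-1)}_{i}|$. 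Summing,
\[
\|\log Df^{q_{n}}-\log D\Phi_{n}\|_{C^{0}(\Delta^{(n-1)}_{0})}\le C\sum_{i=0}^{q_{n}-1}|\Delta^{(n-1)}_{i}|\,\mathcal{Z}_{\gamma}(|\Delta^{(n-1)}_{i}|)\le C' n^{-\gamma},
\]
which, after reinserting the geometric factor, gives $\|f_{n}-F_{n}\|_{C^{1}}\le C n^{-\gamma}$. For the second derivative one differentiates: $(\log Df^{q_{n}})'(\xi)=\sum_{i}(\log Df)'(f^{i}(\xi))\,Df^{i}(\xi)$ a.e.\ (the hypothesis $\gamma>1$ in \eqref{ok1} forces $Df$, hence $\log Df$, to be Lipschitz, so $D^{2}f\in L^{\infty}$). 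Since $D^{2}f$ need not be continuous, $(\log Df)'$ has no pointwise modulus; but telescoping the Zygmund bound on the scale-$\tau$ averages of $D^{2}f$ over dyadic scales yields $\mathrm{osc}\big((\log Df)';\Delta\big)\le C\,\mathcal{Z}_{\gamma-1}(|\Delta|)$ for $\Delta\in\mathcal{P}_{n}$ — one logarithmic power is lost, the dyadic sum converging precisely because $\gamma>1$. Substituting this into $(\log Df^{q_{n}})'-(\log D\Phi_{n})'$, using $Df^{i}(\xi)\asymp|\Delta^{(n-1)}_{i}|/|\Delta^{(n-1)}_{0}|$ and $\sum_{i}|\Delta^{(n-1)}_{i}|\le1$, and multiplying by $|\Delta^{(n-1)}_{0}|$, gives $\|D^{2}f_{n}-D^{2}F_{n}\|_{C^{0}}\le C n^{-(\gamma-1)}$.

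\emph{Main obstacle.} The delicate point is the term-by-term comparison in the third step: the distortion calculus must be arranged so that, against the M\"obius model, only the second symmetric difference of $Df$ survives — a cruder bound of $\mathrm{osc}(\log Df;\Delta^{(n-1)}_{i})$ by the modulus of continuity of $Df$ (which is of order $|\Delta^{(n-1)}_{i}|\,\mathcal{Z}_{\gamma-1}(|\Delta^{(n-1)}_{i}|)$) would already cost a logarithmic power in the $C^{1}$ estimate. The second subtle ingredient is the passage from the Zygmund control of symmetric second differences of $Df$ to the oscillation estimate for $(\log Df)'$ with the sharp exponent $\gamma-1$, which is where $\gamma>1$ is genuinely needed.
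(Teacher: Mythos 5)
The paper contains no proof of Theorem \ref{coefbog} at all: it is imported verbatim from \cite{Renorm} (just as Lemmas \ref{bounded} and \ref{A1B1} are reduced to Proposition 7.1 of that paper), so there is no internal argument to compare yours against; the comparison can only be with the proof in the cited source. Judged on that basis, your plan reproduces the correct overall strategy, the one going back to Khanin--Vul and Khanin--Koci\'{c} and carried out in \cite{Renorm}: rescale by the affine map $\mathcal{A}_{n}$; observe that $F_{n}$ is exactly the M\"{o}bius map matching the renormalization at the endpoints (indeed $F_{n}(0)=a_{n}$ and $F_{n}(-1)=-b_{n}=f_{n}(-1)$, since $1+v_{n}=(c_{n}-a_{n})/b_{n}$); use Denjoy-type bounded geometry, valid for every irrational rotation number, so that the mesh of $\mathcal{P}_{n}$ decays exponentially and hence $\mathcal{Z}_{\gamma}(|\Delta|)\leq Cn^{-\gamma}$ for $\Delta\in\mathcal{P}_{n}$; and convert the condition (\ref{ok1}) into the rates $n^{-\gamma}$ and $n^{-(\gamma-1)}$. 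Your supporting observations are correct, including the regularity gain for $\gamma>1$ (a Marchaud-type inequality does give that $Df$ is Lipschitz away from the break, with an oscillation bound of order $\mathcal{Z}_{\gamma-1}$ for its derivative).

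However, the step you label the core comparison is precisely the content of the theorem, and you assert it rather than prove it. To close it you would have to (i) specify the fractional linear approximation of $f$ on each $\Delta^{(n-1)}_{i}$ and show that its error, measured in a form that survives composition (cross-ratio or nonlinearity estimates), is controlled by the second symmetric difference data $|\Delta^{(n-1)}_{i}|\,\mathcal{Z}_{\gamma}(|\Delta^{(n-1)}_{i}|)$ and not merely by the modulus of continuity of $Df$ -- you correctly flag this as the delicate point, but flagging it is not doing it; (ii) control the composition of the $q_{n}$ M\"{o}bius approximants and show that its parameters agree, to accuracy $n^{-\gamma}$, with $(a_{n},v_{n},c_{n})$, whose definition is through interval lengths of $\mathcal{P}_{n}$ -- the sentence ``the first-order parts telescope and reproduce $\log D\Phi_{n}$ exactly'' hides exactly this parameter-matching problem, which in \cite{Renorm} is a separate substantial estimate (the source of their Proposition 7.1); and (iii) justify the differentiation and oscillation bookkeeping in the $C^{2}$ bound, where the loss of one logarithmic power must be shown to be the only loss. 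Without (i)--(iii) the proposal is an accurate outline of the known proof, not a proof.
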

The following lemma will be used in the subsequent  sections.

\begin{lem}\label{bounded}
Let $f \in \mathrm{D}^{1+\mathcal{Z}_{\gamma}}(\mathbb{S}^{1}\setminus \{\xi_{0}\})$ and $\gamma>1.$
Suppose the  rotation number of $f$ is irrational. There exists  a constant
$Q=Q(f)>0$   such that
\begin{align*}
\|f_{n}\|_{C^{2}([-1,0])}\leq Q.
\end{align*}
\end{lem}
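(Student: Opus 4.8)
The plan is to treat the finitely many renormalizations with $n<N_{0}$ trivially, and for the tail $n\ge N_{0}$ to use the M\"obius approximation of Theorem \ref{coefbog} to reduce the desired $C^{2}$ bound on $f_{n}$ to a uniform $C^{2}$ bound on the explicit M\"obius maps $F_{n}$ restricted to $[-1,0]$. For $n<N_{0}$ there are only finitely many $f_{n}$, each of finite $C^{2}([-1,0])$ norm (with one-sided derivatives at the endpoints; the needed regularity of $f$ off $\xi_{0}$ is provided by (\ref{ok1}) with $\gamma>1$, since $\int_{0}^{1/2}\tau^{-1}\mathcal{Z}_{\gamma}(\tau)\,d\tau<\infty$). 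For $n\ge N_{0}$, Theorem \ref{coefbog} gives $\|f_{n}-F_{n}\|_{C^{1}([-1,0])}\le Cn^{-\gamma}\le C$ and $\|D^{2}f_{n}-D^{2}F_{n}\|_{C^{0}([-1,0])}\le Cn^{-(\gamma-1)}\le C$, so $\|f_{n}\|_{C^{2}([-1,0])}\le\|F_{n}\|_{C^{2}([-1,0])}+2C$; hence it suffices to bound $\|F_{n}\|_{C^{2}([-1,0])}$ uniformly over $n\ge N_{0}$.

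Differentiating $F_{n}(z)=(a_{n}+c_{n}z)/(1-v_{n}z)$ gives $F_{n}'(z)=(c_{n}+v_{n}a_{n})(1-v_{n}z)^{-2}$ and $F_{n}''(z)=2v_{n}(c_{n}+v_{n}a_{n})(1-v_{n}z)^{-3}$, while $F_{n}(0)=a_{n}$ and, using the definition of $v_{n}$, $F_{n}(-1)=-b_{n}$. Setting $\delta_{n}:=\min_{z\in[-1,0]}|1-v_{n}z|$ we obtain $\|F_{n}\|_{C^{2}([-1,0])}\le|a_{n}|+|c_{n}+v_{n}a_{n}|\,\delta_{n}^{-2}(1+2|v_{n}|\delta_{n}^{-1})$, so everything reduces to showing that $a_{n}$ and $v_{n}$ are uniformly bounded and that $\delta_{n}$ is bounded away from $0$. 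Now $c_{n}\in\{c,c^{-1}\}$, and the bounds $0<a_{n}\le K$, $0<b_{n}\le1$ (and, more generally, the comparability of adjacent elements of the dynamical partitions and bounded distortion of the maps $f^{q_{k}}$ along the pairwise interior-disjoint images $\Delta^{(k-1)}_{j}\in\mathcal{P}_{k}$) are the standard a priori bounds for circle diffeomorphisms with a break whose derivative has bounded variation; see \cite{KV1991} and the estimates used in \cite{Renorm}. These also yield $|\Delta^{(n)}_{q_{n-1}}|\asymp|\Delta^{(n)}_{0}|$, whence $b_{n}\ge1-c'a_{n}$ and therefore $a_{n}+b_{n}\ge\min(1/2,(2c')^{-1})>0$; since also $a_{n}+b_{n}\le1+K$, we get $a_{n}+b_{n}\asymp1$.

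The step I expect to be the main obstacle — especially as no Diophantine or bounded-type hypothesis is available — is the lower bound on $\delta_{n}$, i.e. keeping the pole $1/v_{n}$ of $F_{n}$ away from $[-1,0]$. I would argue as follows: since $\|f_{n}-F_{n}\|_{C^{1}([-1,0])}$ is finite and $f_{n}$ is real-valued on $[-1,0]$, $F_{n}$ has no pole there, and as $1-v_{n}z=1$ at $z=0$ it stays positive on $[-1,0]$, so $v_{n}>-1$ and $\delta_{n}=\min(1,1+v_{n})$. Integrating $F_{n}'(z)=F_{n}'(0)(1-v_{n}z)^{-2}$ over $[-1,0]$ (legitimate, no pole) and using $F_{n}(0)-F_{n}(-1)=a_{n}+b_{n}$ yields the identity $F_{n}'(0)=(a_{n}+b_{n})(1+v_{n})$. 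On the other hand the one-sided derivative $f_{n}'(0)=Df^{q_{n}}(\xi_{0})\asymp|\Delta^{(n-1)}_{q_{n}}|/|\Delta^{(n-1)}_{0}|=a_{n}+b_{n}\asymp1$ by bounded distortion of $f^{q_{n}}$ on $\Delta^{(n-1)}_{0}$ together with the previous paragraph; hence $F_{n}'(0)\asymp1$ for all $n\ge N_{0}$ (after enlarging $N_{0}$ if necessary, using $|F_{n}'(0)-f_{n}'(0)|\le Cn^{-\gamma}$). Comparing with the identity gives $1+v_{n}=F_{n}'(0)/(a_{n}+b_{n})\asymp1$, so $1+v_{n}$ — and hence $\delta_{n}$ — is bounded away from $0$ and $v_{n}$ is bounded. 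Substituting these bounds into the displayed estimate for $\|F_{n}\|_{C^{2}([-1,0])}$, together with the reduction of the first paragraph, completes the proof.
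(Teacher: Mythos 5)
Your proof is correct, but it takes a genuinely different route from the paper's, whose ``proof'' of Lemma \ref{bounded} is a one-line citation of Theorem \ref{coefbog} together with Proposition 7.1 of \cite{Renorm}; that proposition is what supplies, for every irrational rotation number, the uniform control of the M\"{o}bius data $(a_{n},b_{n},v_{n})$, so nothing is re-derived in the paper. You instead reconstruct exactly this coefficient control from first principles: the Denjoy/Finzi a priori estimates (valid here because $\log Df$ has bounded variation) give $a_{n}\leq e^{v}$, $|\Delta^{(n)}_{q_{n-1}}|\asymp|\Delta^{(n)}_{0}|$ and hence $a_{n}+b_{n}\asymp 1$; the identity $DF_{n}(0)=(a_{n}+b_{n})(1+v_{n})$ is in fact a purely algebraic consequence of $v_{n}=(c_{n}-a_{n}-b_{n})/b_{n}$ (so the integration and the no-pole continuity argument are not even needed), and combining it with $Df_{n}(0)=Df^{q_{n}}(\xi_{0})\in[e^{-v},e^{v}]$ and the $C^{1}$-closeness of Theorem \ref{coefbog} pins $1+v_{n}$ between positive constants, after which your displayed bound controls $\|F_{n}\|_{C^{2}([-1,0])}$ and hence $\|f_{n}\|_{C^{2}([-1,0])}$ for $n\geq N_{0}$; the finitely many remaining $n$ are handled by noting that (\ref{ok1}) with $\gamma>1$ forces $Df$ to be $C^{1}$ off the break (your integrability criterion $\int_{0}\tau^{-1}\mathcal{Z}_{\gamma}(\tau)\,d\tau<\infty$ is the classical Marchaud-type condition), which is implicit in the paper's framework since Theorem \ref{coefbog} already speaks of $D^{2}f_{n}$. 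What your route buys is a self-contained argument making visible that only bounded variation of $\log Df$ enters, i.e. no bounded-type or half-bounded hypothesis is needed, at the cost of re-proving part of \cite{Renorm}; a further shortcut is available, since Denjoy's inequality $e^{-v}\leq Df_{n}\leq e^{v}$ at both endpoints plus $DF_{n}(-1)=DF_{n}(0)(1+v_{n})^{-2}$ bounds $(1+v_{n})^{2}$ from both sides at once. The only cosmetic slip is in the display bounding $\|F_{n}\|_{C^{2}([-1,0])}$: the term $\sup_{[-1,0]}|F_{n}|$ should be majorized by $\max(a_{n},b_{n})$ (or by $a_{n}+\sup|DF_{n}|$) rather than by $|a_{n}|$ alone, which does not affect the conclusion.
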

\begin{proof}
The proof of the lemma implies from Theorem \ref{coefbog} and
Proposition 7.1 stated in \cite{Renorm}.
\end{proof}

\emph{Half-bounded rotation numbers.}
The half-bounded rotation numbers were defined by  Khanin and Teplinsky in \cite{KT2013} as follows.
Denote by $M_{o}$ and $M_{e}$ the class of all irrational rotation numbers
$\rho=[k_{1},k_{2},...),$ such that 
$$
M_{o}=\{\rho: \,(\exists C>0)\,(\forall m\in \mathbb{N})\,\, k_{2m-1}\leq C\},\,\,\,\,
M_{e}=\{\rho: \,(\exists C>0)\,(\forall m\in \mathbb{N})\,\, k_{2m}\leq C\}.
$$
Let us formulate the following theorem borrowed from \cite{KT2013}.
\begin{thm}\label{TK}
Let $f$ and $\tilde{f}$ be two $C^{2+\nu}$-smooth circle diffeomorphisms with breaks of the same size $c$
and the same rotation number $\rho\in M_{e}$ in case of $c>1,$ or $\rho\in M_{o}$ in case of
$0<c<1.$
There exist constants $C=C(f, \tilde{f})>0$ and $\mu\in (0,1)$ such that
$$
\|f_{n}-\tilde{f}_{n}\|_{C^{2}([-1,0])}\leq C\mu^{n}.
$$
\end{thm}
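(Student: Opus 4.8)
The statement is the main technical estimate of \cite{KT2013}; here is the strategy one would follow. Write $\mathcal{R}(f_{n},g_{n})=(f_{n+1},g_{n+1})$ for the renormalization operator acting on commuting pairs; since $f^{q_{n+1}}=f^{q_{n-1}}\circ (f^{q_{n}})^{k_{n+1}}$, the pair $(f_{n+1},g_{n+1})$ is obtained from $(f_{n},g_{n})$ by an affine rescaling followed by $k_{n+1}+O(1)$ compositions. \textbf{A priori bounds:} because $\log Df$ and $\log D\tilde f$ have bounded variation, the Denjoy estimate gives uniformly bounded distortion of $f^{q_{n}}$ and $\tilde f^{q_{n}}$ on the renormalization neighbourhoods $\widehat{\Delta}^{(n-1)}_{0}$; together with $C^{2+\nu}$ smoothness off the break this shows, as in Lemma~\ref{bounded} and \cite{KV1991}, that the pairs $(f_{n},g_{n})$ and $(\tilde f_{n},\tilde g_{n})$ stay for all $n$ in a fixed compact family $\mathcal{K}$ of $C^{2+\nu}$ commuting pairs with break $c$: derivatives bounded above and away from $0$, $C^{2+\nu}$ norms bounded, and --- crucially using half-boundedness --- the rescaling constants (equivalently the lengths $a_{n}$ at the parity selected by $\rho\in M_{e}$ resp. $\rho\in M_{o}$) bounded away from $0$ and $\infty$.

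\textbf{Contraction of renormalization.} On $\mathcal{K}$ the operator $\mathcal{R}$ is a weak contraction in a suitable $C^{2}$-type metric and $\mathcal{R}^{2}$ is a uniform contraction. The mechanism is that the sub-interval rescaled onto $[-1,0]$ at the next step carries an $f_{n}$-derivative bounded by some $\theta_{n}<1$ --- essentially the ratio $|\Delta^{(n+1)}_{0}|/|\Delta^{(n-1)}_{0}|$ of nested elements of the dynamical partition --- so that in the chain-rule expansion of a composition difference the deep terms are damped by $\theta_{n}$. For a single step $\theta_{n}$ may be close to $1$ when $k_{n+1}$ is large; the half-bounded hypothesis is exactly what forces the unbounded partial quotients to sit at the levels of the renormalization tower where the accumulated break factor has the sign producing definite contraction, while at the remaining levels $k$ is bounded and contraction is automatic. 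Composing two consecutive steps then yields a uniform rate $\theta<1$ on $\mathcal{K}$.

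\textbf{Two orbits with the same combinatorics.} Since $f$ and $\tilde f$ have the same rotation number, $(f_{n},g_{n})$ and $(\tilde f_{n},\tilde g_{n})$ are the $n$-th iterates of $\mathcal{R}$ applied with \emph{identical} combinatorial data $(k_{1},k_{2},\dots)$, so the recursion for their difference is homogeneous. Setting $D_{n}=\|f_{n}-\tilde f_{n}\|_{C^{2}}+\|g_{n}-\tilde g_{n}\|_{C^{2}}$, expanding composition differences by the mean value theorem and the chain rule and inserting the derivative bounds above together with the damping $\theta_{n}$, one gets $D_{n+2}\le\theta D_{n}$ for all large $n$, hence $D_{n}\le C\mu^{n}$ with $\mu=\theta^{1/2}$, which contains the asserted bound for $\|f_{n}-\tilde f_{n}\|_{C^{2}}$. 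An equivalent bookkeeping splits $f_{n}-\tilde f_{n}$ into a M\"obius component, the difference $F_{n}-\tilde F_{n}$ of the fractional-linear approximants of Theorem~\ref{coefbog} (governed by the finite-dimensional M\"obius renormalization, itself contracting for half-bounded combinatorics), and a transverse component, the difference of the extra nonlinearities $Nf_{n}-NF_{n}$ and $N\tilde f_{n}-N\tilde F_{n}$ with $N\varphi=(\log D\varphi)'$ (contracted by the transverse hyperbolicity of $\mathcal{R}$); both pieces decay exponentially.

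\textbf{Main obstacle.} The crux is the uniform contraction of $\mathcal{R}^{2}$ in a $C^{2}$ metric. This requires, first, that the compact family $\mathcal{K}$ be genuinely uniform over all admissible half-bounded $\rho$ --- so that large partial quotients at the ``good'' parity do not destroy the bounds --- which is where $C^{2+\nu}$ regularity enters, through H\"older control of the nonlinearities beyond the fixed jump $\log c^{2}$ at the break; and, second, a careful tracking of the break factors $c^{\pm1}$ along the tower showing that the hypothesis $\rho\in M_{e}$ for $c>1$ (resp. $\rho\in M_{o}$ for $0<c<1$) is precisely what turns the potentially neutral direction into a contracting one. The remaining steps reduce to standard distortion theory and chain-rule estimates.
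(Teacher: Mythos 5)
First, a point of comparison: the paper does not prove Theorem \ref{TK} at all --- it is explicitly ``borrowed from \cite{KT2013}'' --- so there is no internal argument to measure your proposal against; what you wrote has to stand on its own as a proof of the Khanin--Teplinsky estimate, and as such it does not. The decisive step, uniform contraction of $\mathcal{R}^{2}$ on your compact family $\mathcal{K}$ in a $C^{2}$-type metric, is asserted rather than proved, and you concede this yourself under ``Main obstacle''. The surrounding material (Denjoy distortion, a priori bounds, the observation that both maps are renormalized with identical combinatorics so the difference recursion is homogeneous) is standard and correct, but the contraction statement carries the entire theorem, and for maps with a break it is exactly the delicate point: a single renormalization step need not contract, the damping factor $\theta_{n}$ you invoke can degenerate when partial quotients are large, and the parity bookkeeping of the break factors is precisely where the hypothesis $\rho\in M_{e}$ for $c>1$ (resp.\ $\rho\in M_{o}$ for $0<c<1$) must enter quantitatively; saying that half-boundedness ``is exactly what forces'' the contraction names the difficulty without resolving it.

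Second, the mechanism you lead with is not the one used in \cite{KT2013}, and the difference matters. Khanin and Teplinsky do not contract $\mathcal{R}$ directly on an infinite-dimensional family of $C^{2+\nu}$ commuting pairs. They first use Khanin--Vul type estimates (the analogue, in the present Zygmund setting, is Theorem \ref{coefbog}) to show that the renormalizations approach the finite-parameter family of M\"obius pairs exponentially fast, with the half-bounded condition keeping the parameters $(a_{n},v_{n})$ in a compact region of the type $\Phi_{c}^{\varepsilon}$ (this is exactly what Lemma \ref{A1B1} of the present paper records). The exponential closeness of $f_{n}$ and $\tilde f_{n}$ is then extracted from hyperbolicity of the induced finite-dimensional dynamics on these parameters --- the renormalization horseshoe --- two maps with the same rotation number and the same $c$ being shadowed by orbits on the same stable leaf. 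So your alternative bookkeeping (M\"obius component plus transverse nonlinearity) is much closer to the actual proof than your contraction-on-$\mathcal{K}$ argument, but the hyperbolicity of the horseshoe and the shadowing step that converts it into the bound $\|f_{n}-\tilde f_{n}\|_{C^{2}([-1,0])}\leq C\mu^{n}$ are the substantial content, and the proposal leaves both open. As it stands it is a sensible roadmap with the central estimate missing.
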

This theorem was extended by Khanin and Koci\'{c} \cite{KSE} for all irrational rotation numbers
and for the class of $\mathrm{D}^{1+\mathcal{Z}_{\gamma}}(\mathbb{S}^{1}\setminus \{\xi_{0}\})$
by Akhadkulov \textit{et al} \cite{Renorm}. More precisely, in  \cite{Renorm}, it was proven the following
\begin{thm}\label{renormcon}
Let $f, \tilde{f}\in\mathrm{D}^{1+\mathcal{Z}_{\gamma}}(\mathbb{S}^{1}\setminus \{\xi_{0}\})$ and
$\gamma>1.$
Assume that $f$ and $\tilde{f}$ have the same break size $c$  and the same rotation number
$\rho\in M_{e}$ in the case of $c>1,$ or $\rho\in M_{o}$ in the case of $0<c<1.$
 There exists a constant  $C=C(f,\tilde{f})>0$ and a natural number $N_{0}=N_{0}(f,\tilde{f})$
such that
$$
\|f_{n}-\tilde{f}_{n}\|_{C^{1}([-1,0])}\leq \frac{C}{n^{\gamma}},
\,\,\,\,\,\,\,\,
\|D^{2}f_{n}-D^{2}\tilde{f}_{n}\|_{C^{0}([-1,0])}\leq \frac{C}{n^{\gamma-1}}
$$
for all $n\geq N_{0}.$
\end{thm}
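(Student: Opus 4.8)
The plan is to run both renormalization sequences against the M\"obius model furnished by Theorem~\ref{coefbog}, and to reduce the whole statement to a two-dimensional recursion for the parameters $(a_n,v_n)$. First I would reduce to these parameters. Write $F_n=F_{a_n,v_n,c_n}$ and $\tilde F_n=F_{\tilde a_n,\tilde v_n,c_n}$ for the M\"obius transformations attached to $f$ and to $\tilde f$, and note that the third argument is the \emph{same} for both, since $f$ and $\tilde f$ share the break size $c$ and hence $c_n=c^{\pm1}$ for both. By Theorem~\ref{coefbog} one has $\|f_n-F_n\|_{C^1([-1,0])}\leq Cn^{-\gamma}$ and $\|D^2f_n-D^2F_n\|_{C^0([-1,0])}\leq Cn^{-(\gamma-1)}$, and likewise with tildes, so by the triangle inequality it is enough to bound $\|F_n-\tilde F_n\|_{C^1([-1,0])}$ and $\|D^2F_n-D^2\tilde F_n\|_{C^0([-1,0])}$. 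Since $(a,v)\mapsto F_{a,v,c}$ is an explicit real-analytic family and the standard a priori bounds for circle diffeomorphisms with a break give $a_n,b_n\asymp1$, $|v_n|\leq C$ and $\inf_{z\in[-1,0]}(1-v_nz)>0$ uniformly in $n$, on the resulting compact parameter set the map $(a,v)\mapsto F_{a,v,c}$ is Lipschitz in the $C^2([-1,0])$ norm; hence
$$
\|F_n-\tilde F_n\|_{C^1([-1,0])}+\|D^2F_n-D^2\tilde F_n\|_{C^0([-1,0])}\leq C\big(|a_n-\tilde a_n|+|v_n-\tilde v_n|\big),
$$
and both assertions follow once I show $|a_n-\tilde a_n|+|v_n-\tilde v_n|\leq Cn^{-\gamma}$.

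Next I would set up the recursion for these parameter differences. The $(n{+}1)$-th renormalization is obtained from the $n$-th one by the renormalization operator $\mathcal R_{k_{n+1}}$ --- the same for $f$ and $\tilde f$, since they have the same partial quotients --- and the parameters are read off from the renormalized pair, for instance $a_{n+1}=f_{n+1}(0)$ and $v_{n+1},b_{n+1}$ from the remaining lengths. The M\"obius family is invariant under $\mathcal R_k$ (the Khanin--Vul observation \cite{KV1991}), so applying $\mathcal R_{k_{n+1}}$ to the M\"obius pair approximating $(f_n,g_n)$ and reading off its parameters defines an explicit map $\Phi_{k_{n+1}}$ of the parameter space. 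Replacing $(f_n,g_n)$ by its M\"obius model before renormalizing costs, by the first step together with the a priori bounds, an error of size $O(n^{-\gamma})$ in $a_{n+1}$ and in $v_{n+1}$; the point that this error is not amplified when $k_{n+1}$ is large is that iterates of the fractional linear model contract geometrically in the regime fixed by the hypothesis on the sign of $c-1$, so that the telescoping sum over the $k_{n+1}$ constituent iterates is geometrically summable. I would thus obtain, with $\delta a_n=a_n-\tilde a_n$ and $\delta v_n=v_n-\tilde v_n$,
$$
(\delta a_{n+1},\delta v_{n+1})=\Phi_{k_{n+1}}(a_n,v_n,c)-\Phi_{k_{n+1}}(\tilde a_n,\tilde v_n,c)+E_n,\qquad |E_n|\leq Cn^{-\gamma}.
$$

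The dynamical input I would then invoke is the hyperbolicity of the M\"obius renormalization: the parameter orbits stay in a fixed compact set $K$ on which each $\Phi_k$ is Lipschitz, and under the hypothesis ($c>1$ with $\rho\in M_e$, or $0<c<1$ with $\rho\in M_o$) the composed parameter cocycle contracts exponentially, $\prod_{j=l+1}^{n}\mathrm{Lip}(\Phi_{k_j}|_K)\leq C\mu^{\,n-l}$ for some $\mu\in(0,1)$ and all $l<n$. This is the mechanism already used for the $C^{2+\nu}$ counterpart in \cite{KT2013,KSE}: along the bounded parity each step contracts by a definite factor, while along the unbounded parity a large partial quotient only improves the contraction, because of the hyperbolic fixed-point structure of the fractional linear model --- which is precisely why the sign of $c-1$ must be matched to the parity of the large partial quotients. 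Granting this, I would put $e_n=|\delta a_n|+|\delta v_n|$ and $\ell_{n+1}=\mathrm{Lip}(\Phi_{k_{n+1}}|_K)$, so that $e_{n+1}\leq \ell_{n+1}e_n+Cn^{-\gamma}$; unrolling the recursion gives $e_n\leq C\mu^n+C\sum_{m<n}\mu^{\,n-m}m^{-\gamma}$, and splitting the sum at $m=\lfloor n/2\rfloor$ --- using $\sum_m m^{-\gamma}<\infty$ (valid since $\gamma>1$) on the range $m\leq n/2$, and $m^{-\gamma}\leq Cn^{-\gamma}$ on the range $m>n/2$ --- yields $e_n\leq C(\mu^{n/2}+n^{-\gamma})\leq Cn^{-\gamma}$ for all large $n$. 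By the first step this gives both estimates of the theorem.

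The hard part, and the place where all the care goes, is the error control in the second step: one must make sure that the $n^{-\gamma}$ defect inherited from Theorem~\ref{coefbog} is not blown up when the partial quotient $k_{n+1}$ is large, which forces one to combine the uniform contraction of iterates of the fractional linear model with the a priori bounds. The other delicate point is running the hyperbolicity estimate of the third step through a general, non-periodic half-bounded sequence of partial quotients rather than a periodic one, which is exactly where the half-boundedness of $\rho$ and its matching with the sign of $c-1$ are used.
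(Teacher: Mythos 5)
A preliminary remark on the comparison you ask for: this paper does not prove Theorem~\ref{renormcon} at all --- it is quoted verbatim from \cite{Renorm} (the same source as Theorem~\ref{coefbog} and Lemma~\ref{A1B1}), so there is no internal proof to measure your argument against; the relevant benchmark is the proof in the cited literature. Against that benchmark your outline is the expected one and, at the level of architecture, correct: approximate $f_n$ and $\tilde f_n$ by their M\"obius models via Theorem~\ref{coefbog}, observe that $c_n$ is common to both, reduce everything to the parameter differences $|a_n-\tilde a_n|+|v_n-\tilde v_n|$, regard the two parameter sequences as pseudo-orbits (with $O(n^{-\gamma})$ defects) of the same fractional-linear renormalization cocycle determined by the common partial quotients, and conclude by the Khanin--Teplinsky fibre contraction \cite{KT2013,KSE}. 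The bookkeeping at the end (the recursion $e_{n+1}\leq \ell_{n+1}e_n+Cn^{-\gamma}$ and the splitting of the sum) is fine and reproduces the stated rates, with the $C^2$ rate $n^{-(\gamma-1)}$ coming from Theorem~\ref{coefbog} rather than from the parameter estimate.

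However, as a proof the proposal leaves exactly the load-bearing estimates as assertions. First, the one-step error bound: $a_{n+1}$ and $v_{n+1}$ are read off from $f^{q_{n+1}}$, i.e.\ from $k_{n+1}$ constituent iterates of $f_n$, so a naive transfer of the $C^1$ defect of Theorem~\ref{coefbog} gets amplified by products of $Df_n$, of size up to $\mathfrak{c}^{2k_{n+1}}$; showing that in the regime fixed by matching the sign of $c-1$ with the parity of the unbounded quotients this amplification does not occur is the main technical content of the cited proof, not a parenthetical. Second, the ``standard a priori bounds'' you invoke ($a_n,b_n\asymp 1$, $1-v_nz$ uniformly bounded below, parameters in a fixed compact set) are not standard for half-bounded rotation numbers: without the matching hypothesis they are simply false (large quotients of the wrong parity degenerate $a_n$), and with it they are precisely Lemma~\ref{A1B1}, i.e.\ Proposition~7.1 of \cite{Renorm} --- part of what is being proved, not an input one may assume. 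Third, the statement $\prod_j \mathrm{Lip}(\Phi_{k_j}|_K)\leq C\mu^{n-l}$ uniformly over half-bounded sequences, including the claim that a large quotient of the good parity ``only improves the contraction,'' is stronger than a black-box citation of the horseshoe of \cite{KT2013} and needs its own argument on the actual (only approximately invariant) region visited by the pseudo-orbits. So the proposal is a faithful reconstruction of the strategy of \cite{Renorm} and \cite{KT2013}, but the three quantitative facts that constitute the proof are presumed rather than derived.
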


\emph{An estimate of $Df_{n}.$}
The following set plays an important role in the investigations
of renormalizations of comuting pairs of M\"{o}bius transformations (see \cite{KT2013}).
$$
\Phi_{c}^{\varepsilon}=\{(a,v):\, \varepsilon<a<c-\varepsilon,\, \varepsilon<\frac{v}{c-1}<1-\varepsilon,
\,v+a-c+1>\varepsilon\},\,\, \varepsilon>0.
$$
\begin{lem}\label{A1B1}
Let $f\in\mathrm{D}^{1+\mathcal{Z}_{\gamma}}(\mathbb{S}^{1}\setminus \{\xi_{0}\}),$
$\gamma>1$ be a circle diffeomorphism with irrational rotation  $\rho$ and  the  break size $c.$
Assume that $\rho\in M_{e}$ if $c>1$ or $\rho\in M_{o}$ if $0<c<1.$
 There exists a constant $\varepsilon=\varepsilon(f)>0$ and a natural number $N_{0}=N_{0}(f)$
such that the projection $(a_{n}, v_{n})$ of the renormalization $(f_{n}, g_{n})$
belongs to $\Phi_{c_{n}}^{\varepsilon}$ for all $n\geq N_{0}.$
\end{lem}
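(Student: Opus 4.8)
The plan is to rewrite the membership $(a_{n},v_{n})\in\Phi_{c_{n}}^{\varepsilon}$ as a system of uniform two–sided estimates for ratios of lengths of neighbouring elements of the dynamical partition $\mathcal{P}_{n}$, and then to prove those estimates. Recall $a_{n}=|\Delta^{(n)}_{0}|/|\Delta^{(n-1)}_{0}|$, $b_{n}=1-|\Delta^{(n)}_{q_{n-1}}|/|\Delta^{(n-1)}_{0}|\in(0,1)$ and $v_{n}=(c_{n}-a_{n}-b_{n})/b_{n}$; a direct computation gives
\[
v_{n}+a_{n}-c_{n}+1=\frac{(1-b_{n})(c_{n}-a_{n})}{b_{n}},\qquad
\frac{v_{n}}{c_{n}-1}=\frac{c_{n}-a_{n}-b_{n}}{b_{n}(c_{n}-1)} .
\]
Unwinding the three inequalities defining $\Phi_{c_{n}}^{\varepsilon}$ with the help of these identities, one sees that to obtain $(a_{n},v_{n})\in\Phi_{c_{n}}^{\varepsilon}$ for some $\varepsilon>0$ it is enough to produce a constant $\kappa\in(0,1)$ and an $N_{0}$ so that, for all $n\ge N_{0}$,
\[
\kappa\le a_{n},\qquad \kappa\le b_{n}\le 1-\kappa,\qquad c_{n}-a_{n}\ge\kappa,\qquad |c_{n}-a_{n}-b_{n}|\ge\kappa,\qquad |\Delta^{(n)}_{0}|\ge(c_{n}+\kappa)\,|\Delta^{(n)}_{q_{n-1}}| ,
\]
with, moreover, $c_{n}-a_{n}-b_{n}$ having the same sign as $c_{n}-1$. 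Thus the whole problem reduces to these a priori bounds.

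First I would dispose of the part of the list not involving the break, namely that $a_{n}$, $b_{n}$ and $1-b_{n}$ are bounded away from $0$. These are the classical a priori bounds for circle diffeomorphisms with a break under half–bounded combinatorics. Since $Df$ has bounded variation and $f\in\mathrm{D}^{1+\mathcal{Z}_{\gamma}}(\mathbb{S}^{1}\setminus\{\xi_{0}\})$, the Denjoy--Koksma inequality gives uniformly bounded distortion of every return map $\pi_{n}$ on $\widehat{\Delta}^{(n-1)}_{0}$; combined with half–boundedness of $\rho$, which makes the Poincar\'{e} return to $\widehat{\Delta}^{(n-1)}_{0}$ a uniformly controlled contraction, so that the $k_{n+1}$ blocks of order $n$ that fill $\Delta^{(n-1)}_{0}$ in (\ref{partition}) have comparable, geometrically decreasing lengths, this yields the required two–sided bounds, exactly as in \cite{KT2013}. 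In the present regularity class they can equally be read off from Theorems \ref{coefbog} and \ref{renormcon} and Lemma \ref{bounded}: the uniform $C^{2}$ bound on $f_{n}$ and its $C^{1}$–closeness to the M\"{o}bius map $F_{n}=F_{a_{n},v_{n},c_{n}}$ force $F_{n}$ to be a uniformly non–degenerate M\"{o}bius homeomorphism of $[-1,0]$ onto $[-b_{n},a_{n}]$, which by itself keeps $a_{n}$ and $b_{n}$ away from the degenerate values $0$ and $1$.

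The remaining, break–dependent estimates — $c_{n}-a_{n}\ge\kappa$, the inequality $|c_{n}-a_{n}-b_{n}|\ge\kappa$ with the correct sign, and $|\Delta^{(n)}_{0}|\ge(c_{n}+\kappa)|\Delta^{(n)}_{q_{n-1}}|$ — are the heart of the matter and use $c\neq1$ in an essential way, together with the parity bookkeeping encoded in $c_{n}$: for $c=1$ the renormalizations converge to affine maps and all three fail. Here I would combine Theorem \ref{coefbog} with the study of the renormalization operator on commuting pairs of M\"{o}bius transformations carried out in \cite{KT2013}. By Theorem \ref{coefbog} the pair $(f_{n},g_{n})$ is $C^{1}$–close, with error $O(n^{-\gamma})$, to the M\"{o}bius pair with data $(a_{n},v_{n},c_{n})$; feeding the bounds of the previous paragraph into the M\"{o}bius renormalization operator reduces the task to a statement about such pairs alone: a commuting pair of M\"{o}bius transformations with break $c_{n}$ whose data are uniformly non–degenerate and occur along half–bounded combinatorics has $(a,v)$ confined to a compact subset of $\{0<a<c,\ 0<v/(c-1)<1,\ v+a-c+1>0\}$, that is, to $\Phi_{c}^{\varepsilon'}$ for some $\varepsilon'>0$. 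This compactness statement is the step I expect to be the main obstacle; it is precisely the purpose for which $\Phi_{c}^{\varepsilon}$ was introduced in \cite{KT2013}, and what remains is only to check that the $C^{1}$–error $O(n^{-\gamma})$ is small enough to preserve these open conditions, which it is once $n\ge N_{0}$. Taking $\varepsilon=\varepsilon'/2$ then gives $(a_{n},v_{n})\in\Phi_{c_{n}}^{\varepsilon}$ for all $n\ge N_{0}$.
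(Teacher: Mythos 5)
Your reduction of the membership $(a_{n},v_{n})\in\Phi_{c_{n}}^{\varepsilon}$ to uniform two--sided bounds on $a_{n}$, $b_{n}$ and on $c_{n}-a_{n}-b_{n}$ is fine as bookkeeping, but the proposal does not actually prove those bounds, and that is where the entire content of the lemma lies. You yourself flag the ``compactness statement'' as the main obstacle and then defer it to \cite{KT2013}; however, the a priori bounds of \cite{KT2013} are established there for $C^{2+\nu}$ diffeomorphisms with a break, whereas the point of the present lemma is precisely that they persist in the weaker Zygmund class $\mathrm{D}^{1+\mathcal{Z}_{\gamma}}$. That extension is exactly Proposition 7.1 of \cite{Renorm}, and the paper's proof consists of citing it; your argument, as written, assumes the key step rather than proving it, and also leans on Lemma \ref{bounded}, which in this paper is itself deduced from the same Proposition 7.1, so nothing new is gained.

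Moreover, the auxiliary justification you offer for the non--degeneracy of $a_{n}$ and $b_{n}$ is not valid: a uniform $C^{2}$ bound on $f_{n}$ together with $C^{1}$--closeness to the M\"{o}bius map $F_{a_{n},v_{n},c_{n}}$ does not keep $a_{n}=f_{n}(0)$ away from $0$, since a M\"{o}bius map with $a_{n}$ arbitrarily small and $v_{n}$ controlled is perfectly bounded in $C^{2}([-1,0])$ (indeed, for $c=1$ and unbounded partial quotients one has $a_{n}\to 0$ along subsequences, with all these bounds intact). The lower bound on $a_{n}$, the sign and size of $c_{n}-a_{n}-b_{n}$, and the estimate $|\Delta^{(n)}_{0}|\ge (c_{n}+\kappa)|\Delta^{(n)}_{q_{n-1}}|$ are genuinely break--dependent facts whose proof must couple the size $c\neq 1$ with the half--bounded combinatorics (geometric decay of the lengths $|\Delta^{(n)}_{i+q_{n-1}+sq_{n}}|$ in $s$ driven by the break); Denjoy--Koksma distortion control alone, which is parity--blind, cannot yield them. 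So there is a genuine gap: the heart of the lemma is asserted, not proved, and the one concrete mechanism you propose for part of it fails.
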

\begin{proof}
The proof follows from Proposition 7.1 in  \cite{Renorm}.
\end{proof}

\begin{lem}\label{A1B2}
Let $f\in\mathrm{D}^{1+\mathcal{Z}_{\gamma}}(\mathbb{S}^{1}\setminus \{\xi_{0}\}),$
$\gamma>1$ be a circle diffeomorphism with irrational rotation  $\rho$ and  the  break size $c.$
Assume that $\rho\in M_{e}$ if $c>1$ or $\rho\in M_{o}$ if $0<c<1.$
 There exists a constant $\varepsilon=\varepsilon(f)>0$ and a natural number $N_{0}=N_{0}(f)$
such that, for all $n\geq N_{0},$ we have
$$
\frac{c_{n}}{(c_{n}+\varepsilon(1-c_{n}))^{2}}-\frac{C}{n^{\gamma}}\leq Df_{n}(z)\leq c^{2}_{n}-\varepsilon(c^{2}_{n}-1-\varepsilon(c_{n}-1))+\frac{C}{n^{\gamma}}
$$
if $c_{n}>1$ and
$$
c^{2}_{n}-\varepsilon(c^{2}_{n}-1-\varepsilon(c_{n}-1))-\frac{C}{n^{\gamma}}\leq Df_{n}(z) \leq \frac{c_{n}}{(c_{n}+(1-c_{n})\varepsilon)^{2}}+\frac{C}{n^{\gamma}}
$$
if $c_{n}<1.$
\end{lem}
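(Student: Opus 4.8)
The plan is to estimate the M\"{o}bius derivative $DF_n$ on $[-1,0]$ and then transfer the estimate to $Df_n$ through the $C^{1}$-closeness of Theorem~\ref{coefbog}; throughout, $z$ is understood to range over $[-1,0]$, where both $F_n$ and that theorem are available. A direct computation with $F_n(z)=(a_n+c_nz)/(1-v_nz)$ gives
$$
DF_n(z)=\frac{c_n+a_nv_n}{(1-v_nz)^{2}}.
$$
I would then invoke Lemma~\ref{A1B1}: there are $\varepsilon=\varepsilon(f)>0$ and $N_0$ such that $(a_n,v_n)\in\Phi_{c_n}^{\varepsilon}$ for $n\ge N_0$, i.e.\ $\varepsilon<a_n<c_n-\varepsilon$ and $\varepsilon<\frac{v_n}{c_n-1}<1-\varepsilon$, and split the proof according to the sign of $c_n-1$.

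When $c_n>1$ the second inequality gives $0<v_n<(1-\varepsilon)(c_n-1)$, so $(1-v_nz)^{2}$ ranges over $[1,(1+v_n)^{2}]$ as $z$ runs over $[-1,0]$; when $c_n<1$ it gives $(1-\varepsilon)(c_n-1)<v_n<0$, hence $0<1+v_n<1$ and $(1-v_nz)^{2}$ ranges over $[(1+v_n)^{2},1]$. In either case $DF_n$ attains its extrema over $[-1,0]$ at the endpoints, with $DF_n(0)=c_n+a_nv_n$ and $DF_n(-1)=(c_n+a_nv_n)/(1+v_n)^{2}$, so that
$$
\frac{c_n+a_nv_n}{(1+v_n)^{2}}\le DF_n(z)\le c_n+a_nv_n\qquad(c_n>1),
$$
$$
c_n+a_nv_n\le DF_n(z)\le\frac{c_n+a_nv_n}{(1+v_n)^{2}}\qquad(c_n<1),
$$
the second line requiring the positivity of $c_n+a_nv_n$, which will be confirmed in a moment.

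It remains to estimate $c_n+a_nv_n$ and $1+v_n$ in terms of $c_n$ and $\varepsilon$ only. For $1+v_n$ the bound on $v_n$ gives $1+v_n\le c_n+\varepsilon(1-c_n)$ if $c_n>1$ and $1+v_n\ge c_n+(1-c_n)\varepsilon$ if $c_n<1$, with $c_n+(1-c_n)\varepsilon>0$ in the second case. For $c_n+a_nv_n$ a crude product bound is too weak, so instead I would use that $v_n$ has constant sign: then $a_n<c_n-\varepsilon$ places $a_nv_n$ on a definite side of $(c_n-\varepsilon)v_n$, and the bound on $v_n$ places $(c_n-\varepsilon)v_n$ on a definite side of $(c_n-\varepsilon)(1-\varepsilon)(c_n-1)$, so that $c_n+a_nv_n<c_n+(c_n-\varepsilon)(1-\varepsilon)(c_n-1)$ for $c_n>1$ and $c_n+a_nv_n>c_n+(c_n-\varepsilon)(1-\varepsilon)(c_n-1)$ for $c_n<1$. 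The elementary identity
$$
c_n+(c_n-\varepsilon)(1-\varepsilon)(c_n-1)=c_n^{2}-\varepsilon\bigl(c_n^{2}-1-\varepsilon(c_n-1)\bigr)=(1-\varepsilon)c_n^{2}+\varepsilon^{2}c_n+\varepsilon(1-\varepsilon)
$$
then identifies the right-hand expression with the upper (resp.\ lower) bound in the statement for $c_n>1$ (resp.\ $c_n<1$), and its last form, being positive, confirms $c_n+a_nv_n>0$. Combining this with the trivial estimates $c_n\le c_n+a_nv_n$ (for $c_n>1$) and $c_n+a_nv_n\le c_n$ (for $c_n<1$) and with the bounds on $1+v_n$ turns the two displayed inequalities for $DF_n(z)$ into
$$
\frac{c_n}{(c_n+\varepsilon(1-c_n))^{2}}\le DF_n(z)\le c_n^{2}-\varepsilon\bigl(c_n^{2}-1-\varepsilon(c_n-1)\bigr)\qquad(c_n>1),
$$
$$
c_n^{2}-\varepsilon\bigl(c_n^{2}-1-\varepsilon(c_n-1)\bigr)\le DF_n(z)\le\frac{c_n}{(c_n+(1-c_n)\varepsilon)^{2}}\qquad(c_n<1).
$$

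Finally, after enlarging $N_0$ so that Theorem~\ref{coefbog} applies as well, one has $|Df_n(z)-DF_n(z)|\le\|f_n-F_n\|_{C^{1}([-1,0])}\le C/n^{\gamma}$ for $n\ge N_0$ and all $z\in[-1,0]$; adding $\pm C/n^{\gamma}$ to the last two displays yields exactly the asserted estimates. I do not expect a genuine obstacle here: the argument is careful bookkeeping of a M\"{o}bius derivative on $[-1,0]$, and the two points that really need attention are keeping track of the sign of $v_n$ (it decides which endpoint of $[-1,0]$ is extremal for $DF_n$ and whether $c_n+a_nv_n>0$) and replacing the naive product estimate for $a_nv_n$ by the sharper comparison of $a_nv_n$ with $(c_n-\varepsilon)(1-\varepsilon)(c_n-1)$, since it is precisely the cancellation in the displayed algebraic identity that makes the bounds coincide with those in the statement.
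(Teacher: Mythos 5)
Your argument is correct and follows essentially the same route as the paper's proof: compute $DF_n(z)=(c_n+a_nv_n)(1-v_nz)^{-2}$, feed in the bounds $\varepsilon<a_n<c_n-\varepsilon$ and $\varepsilon<\tfrac{v_n}{c_n-1}<1-\varepsilon$ from Lemma \ref{A1B1} with a case split on the sign of $c_n-1$, use the identity $c_n+(c_n-\varepsilon)(1-\varepsilon)(c_n-1)=c_n^2-\varepsilon(c_n^2-1-\varepsilon(c_n-1))$, and then pass from $DF_n$ to $Df_n$ via Theorem \ref{coefbog}. The only (harmless) differences are bookkeeping choices in which crude bound on $c_n+a_nv_n$ is paired with which bound on $(1+v_n)^2$, and your explicit verification of $c_n+a_nv_n>0$, which the paper leaves implicit.
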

\begin{proof}
It is easy to see that $DF_{n}(z)=(c_{n}+a_{n}v_{n})(1-v_{n}z)^{-2}.$ Let
$c_{n}>1.$ Lemma \ref{A1B1} implies $(c_{n}-1)\varepsilon< v_{n}<(c_{n}-1)(1-\varepsilon)$ and hence
$1+(c_{n}-1)\varepsilon< 1+v_{n}<c_{n}-\varepsilon(c_{n}-1).$ Using these inequalities
we get
\begin{multline}\label{20201}
DF_{n}(z)\leq c_{n}+a_{n}v_{n}<
c_{n}+(c_{n}-\varepsilon)(c_{n}-1)(1-\varepsilon)
=c^{2}_{n}-\varepsilon(c^{2}_{n}-1-\varepsilon(c_{n}-1))
\end{multline}
and
\begin{equation}\label{20201a}
DF_{n}(z)\geq\frac{c_{n}+a_{n}v_{n}}{(1+v_{n})^{2}}>
\frac{c_{n}+\varepsilon^{2}(c_{n}-1)}{(c_{n}+\varepsilon(1-c_{n}))^{2}}
>\frac{c_{n}}{(c_{n}-\varepsilon(c_{n}-1))^{2}}.
\end{equation}
Assume  $c_n<1.$ By Lemma \ref{A1B1} we have
$
(c_n-1)(1-\varepsilon)< v_n<(c_n-1)\varepsilon,
$
which implies that
$c_n+(1-c_n)\varepsilon<1+v_n<1+(c_n-1)\varepsilon$
and $(1-v_nz)^2>(1+v_n)^2.$ Hence we have
\begin{equation}\label{20202}
DF_n(z)\leq\frac{c_n+a_nv_n}{(1+v_n)^2}
<\frac{c_n-(1-c_n)\varepsilon^{2}}{(c_n+(1-c_n)\varepsilon)^2}
<\frac{c_n}{(c_n+(1-c_n)\varepsilon)^2}
\end{equation}
and
\begin{equation}\label{20202a}
DF_n(z)\geq c_n+a_nv_n>c_{n}+(c_{n}-\varepsilon)(c_{n}-1)(1-\varepsilon)
=c^{2}_{n}-\varepsilon(c^{2}_{n}-1-\varepsilon(c_{n}-1)).
\end{equation}
The proof of the lemma now follows from (\ref{20201})-(\ref{20202a}) and Theorem \ref{coefbog}.
\end{proof}
Denote $\mathfrak{c}=\max\{c, c^{-1}\}.$ It follows from Lemma \ref{A1B2}  the
following
\begin{cor}\label{corollary2020}
Let $f\in\mathrm{D}^{1+\mathcal{Z}_{\gamma}}(\mathbb{S}^{1}\setminus \{\xi_{0}\}),$
$\gamma>1$ be a circle diffeomorphism with irrational rotation  $\rho$ and  the  break size $c.$
Assume that $\rho\in M_{e}$ if $c>1$ or $\rho\in M_{o}$ if $0<c<1.$
 There exists a  natural number $N_{0}=N_{0}(f)$
such that
$$
\frac{1}{\mathfrak{c}^{2}}-\frac{C}{n^{\gamma}}\leq Df_{n}(z)\leq \mathfrak{c}^{2}+\frac{C}{n^{\gamma}}
$$
for all $n\geq N_{0}.$
\end{cor}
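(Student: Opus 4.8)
The plan is to read the two-sided bound off directly from Lemma~\ref{A1B2}, the only ingredients being elementary algebra and the fact that $c_n\in\{c,c^{-1}\}$: this gives $\mathfrak{c}^{2}=\max\{c_n^{2},c_n^{-2}\}$ and $\mathfrak{c}^{-2}=\min\{c_n^{2},c_n^{-2}\}$ for every $n$, and moreover $c_n=\mathfrak{c}$ exactly when $c_n>1$ while $c_n=\mathfrak{c}^{-1}$ exactly when $c_n<1$. First I would fix $\varepsilon=\varepsilon(f)$ and $N_{0}=N_{0}(f)$ as in Lemma~\ref{A1B2}; we may assume $\varepsilon<1$, since $\varepsilon$ is already forced to be small by the very definition of $\Phi_{c_n}^{\varepsilon}$, and in any case shrinking it is harmless because Lemmas~\ref{A1B1} and~\ref{A1B2} remain valid for every smaller positive value. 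It then suffices to verify, in each of the two cases $c_n>1$ and $c_n<1$, that the algebraic prefactors appearing in Lemma~\ref{A1B2} are bounded below by $\mathfrak{c}^{-2}$ and above by $\mathfrak{c}^{2}$; the $\pm C/n^{\gamma}$ terms then transfer verbatim, so the corollary holds with the same $C$ and $N_{0}$.

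Suppose first $c_n>1$, so that $\mathfrak{c}^{-2}=c_n^{-2}$ and $\mathfrak{c}^{2}=c_n^{2}$. For the upper estimate it suffices to check $c_n^{2}-\varepsilon\bigl(c_n^{2}-1-\varepsilon(c_n-1)\bigr)\le c_n^{2}$, i.e.\ $c_n^{2}-1-\varepsilon(c_n-1)\ge0$; but $c_n^{2}-1-\varepsilon(c_n-1)=(c_n-1)(c_n+1-\varepsilon)$, and both factors are positive since $c_n>1$ and $\varepsilon<1<c_n+1$. For the lower estimate, $\varepsilon(c_n-1)>0$ gives $0<c_n+\varepsilon(1-c_n)=c_n-\varepsilon(c_n-1)<c_n$, hence $\bigl(c_n+\varepsilon(1-c_n)\bigr)^{2}<c_n^{2}<c_n^{3}$ and therefore $c_n\bigl(c_n+\varepsilon(1-c_n)\bigr)^{-2}>c_n^{-2}$. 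Suppose now $0<c_n<1$, so that $\mathfrak{c}^{-2}=c_n^{2}$ and $\mathfrak{c}^{2}=c_n^{-2}$. For the lower estimate one needs $c_n^{2}-\varepsilon\bigl(c_n^{2}-1-\varepsilon(c_n-1)\bigr)\ge c_n^{2}$, i.e.\ $c_n^{2}-1-\varepsilon(c_n-1)\le0$; factoring as before, $(c_n-1)(c_n+1-\varepsilon)\le0$ because now $c_n-1<0$ while $c_n+1-\varepsilon>0$. For the upper estimate, $(1-c_n)\varepsilon>0$ gives $c_n+(1-c_n)\varepsilon>c_n>0$, so $\bigl(c_n+(1-c_n)\varepsilon\bigr)^{2}>c_n^{2}>c_n^{3}$ and hence $c_n\bigl(c_n+(1-c_n)\varepsilon\bigr)^{-2}<c_n^{-2}=\mathfrak{c}^{2}$. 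Combining the two cases with the error terms of Lemma~\ref{A1B2} yields
\[
\frac{1}{\mathfrak{c}^{2}}-\frac{C}{n^{\gamma}}\le Df_n(z)\le \mathfrak{c}^{2}+\frac{C}{n^{\gamma}}
\qquad\text{for all } n\ge N_{0}.
\]

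There is no genuine obstacle in this argument; it is a pure bookkeeping exercise. The only points requiring a little care are that the factorization of $c_n^{2}-1-\varepsilon(c_n-1)$ makes the relevant inequality flip direction between the two cases (because the sign of $c_n-1$ changes), that one is entitled to take $\varepsilon<1$ from the start, and that the constant $C$ in the statement is to be understood as the constant $C=C(f)$ produced by Theorem~\ref{coefbog} and used in Lemma~\ref{A1B2} — which one may of course enlarge if a single uniform constant is preferred.
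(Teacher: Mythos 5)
Your proof is correct and takes essentially the same route as the paper, which simply records the corollary as an immediate consequence of Lemma~\ref{A1B2}; your case-by-case algebra (the factorization $c_n^{2}-1-\varepsilon(c_n-1)=(c_n-1)(c_n+1-\varepsilon)$ and the comparison of $\bigl(c_n\pm\varepsilon(1-c_n)\bigr)^{2}$ with $c_n^{3}$) just makes explicit the bookkeeping the paper leaves to the reader.
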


\section{Universal estimates for the segments  of $\mathcal{P}_{n}$}
In this section we estimate the ratio of lengths of segments of dynamical partition
of circle diffeomorphisms satisfying in the  setting of rotation number is bounded type.
\begin{lem}\label{comporision}
Let $f\in\mathrm{D}^{1+\mathcal{Z}_{\gamma}}(\mathbb{S}^{1}\setminus \{\xi_{0}\}),$
$\gamma>1$ be a circle diffeomorphism with the  break size $c$ and irrational rotation number $\rho$
of bounded type such that $s(\rho)=m.$ Let $\Delta^{(n+k)}\in \mathcal{P}_{n+k}$
such that $\Delta^{(n+k)}\subset \widehat{\Delta}_{0}^{(n-1)}$ where $k\geq 1.$
There exists a constant $C=C(f)>0$ and a natural number $N_{0}=N_{0}(f)$
such that
$$
\frac{|\Delta^{(n+k)}|}{|\widehat{\Delta}_{0}^{(n-1)}|}\leq C\lambda^{k}\Big(1+\frac{1}{n^{\gamma-1}}\Big)
$$
for all $n\geq N_{0},$ where $\lambda=\sqrt{\frac{\mathfrak{c}^{2}}{\mathfrak{c}^{2}+1}}.$
\end{lem}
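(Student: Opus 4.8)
The plan is to exploit the renormalization estimates of Section~3 to show that passing from a renormalization neighborhood $\widehat{\Delta}_0^{(n-1)}$ to a sub-segment $\widehat{\Delta}_0^{(n)}$ contracts the length by a definite factor, uniformly in $n$ up to the perturbative term $Cn^{-(\gamma-1)}$, and then iterate this $k$ times.

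First I would set up the one-step contraction. Recall that $\widehat{\Delta}_0^{(n-1)} = \Delta_0^{(n)}\cup\Delta_0^{(n-1)}$ has length $|\Delta_0^{(n)}|+|\Delta_0^{(n-1)}|$, while $\widehat{\Delta}_0^{(n)}=\Delta_0^{(n+1)}\cup\Delta_0^{(n)}$ sits inside it. Normalizing by $|\Delta_0^{(n-1)}|$, the ratio $|\widehat{\Delta}_0^{(n)}|/|\widehat{\Delta}_0^{(n-1)}|$ is expressed purely in terms of the renormalization data: the numerator becomes $a_n$ (for $|\Delta_0^{(n)}|/|\Delta_0^{(n-1)}|$) plus $a_n$ times the analogous next-level ratio, and the whole thing is controlled by $Df_n$ together with $a_n$. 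The key input is Corollary~\ref{corollary2020}: $Df_n(z)\le \mathfrak c^2 + Cn^{-\gamma}$ on $[-1,0]$. Since $f_n(0)=a_n$ and $f_n(-1)=0$ (this is how $\mathcal A_n$ and $\pi_n$ are normalized), the mean value theorem gives $a_n = f_n(0)-f_n(-1) = Df_n(\theta)\le \mathfrak c^2 + Cn^{-\gamma}$ for some $\theta\in(-1,0)$; together with $|\widehat{\Delta}_0^{(n-1)}|$ normalized to $1+a_n$, and using that $a_n$ is also bounded below away from $0$ by Lemma~\ref{A1B1}, I get
$$
\frac{|\widehat{\Delta}_0^{(n)}|}{|\widehat{\Delta}_0^{(n-1)}|}\le \frac{\mathfrak c^2}{\mathfrak c^2+1}\Bigl(1+\frac{C}{n^{\gamma}}\Bigr)=\lambda^2\Bigl(1+\frac{C}{n^{\gamma}}\Bigr),
$$
after a short computation of the form $\frac{a+a\cdot(\text{bdd})}{1+a}$ and using monotonicity of $t\mapsto t/(t+1)$. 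The bounded-type hypothesis $s(\rho)=m$ enters here to keep all the combinatorial multiplicities (the number of order-$n$ segments inside an order-$(n-1)$ segment is $k_{n+1}+1\le m+1$) uniformly bounded, so that $\widehat{\Delta}_0^{(n)}$ is indeed a \emph{definite} fraction of $\widehat{\Delta}_0^{(n-1)}$ rather than possibly all of it.

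Then I would iterate: writing $\Delta^{(n+k)}\subset\widehat{\Delta}_0^{(n+k-1)}\subset\cdots\subset\widehat{\Delta}_0^{(n-1)}$, telescoping the one-step bound yields
$$
\frac{|\Delta^{(n+k)}|}{|\widehat{\Delta}_0^{(n-1)}|}\le \prod_{j=0}^{k-1}\lambda^2\Bigl(1+\frac{C}{(n+j)^{\gamma}}\Bigr)\le \lambda^{2k}\prod_{j\ge n}\Bigl(1+\frac{C}{j^{\gamma}}\Bigr).
$$
Since $\gamma>1$, the infinite product $\prod_{j\ge n}(1+Cj^{-\gamma})$ converges, is bounded by $\exp\bigl(C\sum_{j\ge n}j^{-\gamma}\bigr)$, and $\sum_{j\ge n}j^{-\gamma}\le C' n^{-(\gamma-1)}$; using $e^{t}\le 1+C''t$ for $t$ bounded (valid for $n\ge N_0$) this factor is $\le 1+C n^{-(\gamma-1)}$. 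Replacing $\lambda^{2k}$ by $\lambda^{k}$ (weaker, and what the statement asks for) absorbing it into the constant, and noting $\Delta^{(n+k)}$ might a priori be the other-order segment $\Delta^{(n+k-1)}$ inside $\widehat{\Delta}_0^{(n+k-2)}$ — which is harmless since it is still contained in $\widehat{\Delta}_0^{(n+k-2)}$ — gives the claimed inequality with $\lambda=\sqrt{\mathfrak c^2/(\mathfrak c^2+1)}$.

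The main obstacle I anticipate is the clean derivation of the one-step ratio bound: one must check that the normalized length ratio $|\widehat{\Delta}_0^{(n)}|/|\widehat{\Delta}_0^{(n-1)}|$ is genuinely governed by $a_n$ and the upper bound on $Df_n$ rather than requiring finer renormalization information, and that the perturbative errors $Cn^{-\gamma}$ propagate multiplicatively into a convergent product with the stated $n^{-(\gamma-1)}$ remainder. The bounded-type assumption is essential precisely at the step where one claims $\widehat{\Delta}_0^{(n)}$ occupies a uniformly bounded-away-from-$1$ proportion of $\widehat{\Delta}_0^{(n-1)}$; without it the factor $\lambda$ would degenerate as $k_{n+1}\to\infty$. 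Everything else is bookkeeping with the recursion $q_n=k_nq_{n-1}+q_{n-2}$ and the partition identity~\eqref{partition}.
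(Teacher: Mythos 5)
Your argument has two genuine gaps. The more serious one is the telescoping step: you write $\Delta^{(n+k)}\subset\widehat{\Delta}_0^{(n+k-1)}\subset\cdots\subset\widehat{\Delta}_0^{(n-1)}$, but a general element of $\mathcal{P}_{n+k}$ contained in $\widehat{\Delta}_0^{(n-1)}$ is \emph{not} contained in the deeper renormalization neighborhoods of $\xi_0$. There are on the order of $q_{n+k}/q_n$ segments of $\mathcal{P}_{n+k}$ inside $\widehat{\Delta}_0^{(n-1)}$, and all but the few adjacent to $\xi_0$ lie outside $\widehat{\Delta}_0^{(n)}$, so your nested chain simply does not exist for them. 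Any proof must first establish the bound for the segments $\Delta_0^{(n+k)}$ attached to the break point and then \emph{transfer} it to an arbitrary $\Delta^{(n+k)}=f^{i_0}(\Delta_0^{(n+k)})$; this transfer is exactly where the bounded-variation hypothesis on $Df$ enters, via a Finzi/Denjoy-type distortion estimate giving $e^{-v}\le \frac{|\Delta_0^{(n+k)}|}{|f^{i_0}(\Delta_0^{(n+k)})|}\cdot\frac{|f^{i_0}(\widehat{\Delta}_0^{(n-1)})|}{|\widehat{\Delta}_0^{(n-1)}|}\le e^{v}$ together with the comparability of $f^{i_0}(\widehat{\Delta}_0^{(n-1)})$ and $\widehat{\Delta}_0^{(n-1)}$. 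This half of the argument (the second half of the paper's proof) is entirely missing from your proposal.

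The second gap is the one-step contraction itself. Your premise $f_n(-1)=0$ is false (in this normalization $f_n(0)=a_n$ but $f_n(-1)=-b_n$), and more importantly the claimed bound $|\widehat{\Delta}_0^{(n)}|/|\widehat{\Delta}_0^{(n-1)}|\le \lambda^2(1+Cn^{-\gamma})$ is too strong. Writing the ratio as $\frac{a_n(1+a_{n+1})}{1+a_n}$, the available information ($a_n<c_n$, $Df_n\le \mathfrak{c}^2+Cn^{-\gamma}$, hence $a_na_{n+1}\le\lambda^2+Cn^{-\gamma}$) does not force it below $\lambda^2$; e.g.\ for break size close to $1$ and golden-mean rotation number one has $a_n,a_{n+1}$ near $0.618$, so the one-level ratio is near $0.618$ while $\lambda^2$ is near $0.5$. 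The correct statement, which is what the upper bound on $Df_n$ actually yields (and what the paper proves as inequality (\ref{G0a}) using the partition identity (\ref{partition}) and the mean value theorem applied to $f^{q_n}$ mapping $\Delta^{(n)}_{(k_{n+1}-1)q_n+q_{n-1}}$ onto $\Delta^{(n)}_{k_{n+1}q_n+q_{n-1}}\supset\Delta_0^{(n+1)}$), is the two-level contraction $|\Delta_0^{(n+1)}|/|\Delta_0^{(n-1)}|\le \lambda^2+Cn^{-\gamma}$, i.e.\ rate $\lambda$ per level, not $\lambda^2$; your conclusion $\lambda^{2k}$ would be strictly stronger than the lemma and is not true in general. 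The bounded-type hypothesis is then used, as you partly anticipate, to pass from this two-level recursion at $\xi_0$ to all depths $k$, while the error factors $1+Cj^{-\gamma}$ multiply out to $1+Cn^{-(\gamma-1)}$ exactly as in your convergent-product computation, which is the one part of your plan that matches the paper.
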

\begin{proof}
First we show that
\begin{equation}\label{G0a}
 \frac{|\Delta^{(n+1)}_{0}|}{|\Delta^{(n-1)}_{0}|}\leq \lambda^{2}+\frac{C}{n^{\gamma}}
\end{equation}
for large enough $n.$ One can  verify that
$\Delta^{(n+1)}_{0}\subset \Delta^{(n)}_{k_{n+1}q_{n}+q_{n-1}}.$
 By (\ref{partition}) and Corollary \ref{corollary2020}
we have 
\begin{equation}\label{G0}
\begin{split}
 \frac{|\Delta^{(n+1)}_{0}|}{|\Delta^{(n-1)}_{0}|}  &\leq
 \frac{1}{1+\frac{|\Delta^{(n)}_{(k_{n+1}-1)q_{n}+q_{n-1}}|}{|\Delta^{(n+1)}_{0}|}}
 \leq \frac{1}{1+\frac{|\Delta^{(n)}_{(k_{n+1}-1)q_{n}+q_{n-1}}|}{|\Delta^{(n)}_{k_{n+1}q_{n}+q_{n-1}}|}}\\
 &\leq \frac{1}{1+(Df^{q_{n}}(\hat{\xi}))^{-1}}=\frac{1}{1+(Df_{n}(\hat{z}))^{-1}}
 \leq \frac{\mathfrak{c}^{2}}{\mathfrak{c}^{2}+1}+\frac{C}{n^{\gamma}}
 \end{split}
  \end{equation}
 where $\hat{\xi}\in \Delta^{(n)}_{(k_{n+1}-1)q_{n}+q_{n-1}}$ and $\hat{z}\in (-1,0)$
 such that $\mathcal{A}_{n}(\hat{z})=\hat{\xi}.$
 Inequality (\ref{G0}) yields
 \begin{equation}\label{G02020}
\begin{split}
 \frac{|\Delta^{(n+2l+1)}_{0}|}{|\Delta^{(n-1)}_{0}|}  &\leq
 \exp\Bigg(\sum_{s=0}^{l}\ln \Big(\lambda^{2}+\frac{C}{(n+2s)^{\gamma}}\Big)\Bigg)
 \leq \lambda^{2(l+1)}\Big(1+\frac{C}{n^{\gamma-1}}\Big).
 \end{split}
  \end{equation}
 Since the rotation number is bounded type
 we have
 \begin{equation}\label{G12020}
 \frac{|\Delta^{(n+k)}_{0}|}{|\widehat{\Delta}^{(n-1)}_{0}|}
 \leq C\lambda^{k}\Big(1+\frac{1}{n^{\gamma-1}}\Big)
  \end{equation}
  for any $k\geq 1$ and for $n$ large.
  Let $\Delta^{(n+k)}$ be any interval satisfying  $\Delta^{(n+k)}\in \mathcal{P}_{n+k}$
and $\Delta^{(n+k)}\subset \widehat{\Delta}_{0}^{(n-1)}$ where $k\geq 1.$
There exists $i_{0}$ such that $f^{i_{0}}(\Delta^{(n+k)}_{0})=\Delta^{(n+k)}.$
We claim that the length of intervals $\Delta_{0}^{(n+k)}$ and $f^{i_{0}}(\Delta^{(n+k)}_{0})$ are comparable,
that is, there exists a constant $C>1$ such that $C^{-1}\leq |\Delta_{0}^{(n+k)}|/|f^{i_{0}}(\Delta^{(n+k)}_{0})|\leq C.$
  Indeed, due to Finzi's inequality we have
   \begin{equation}\label{G22020}
 e^{-v}\leq\frac{|\Delta_{0}^{(n+k)}|}{|f^{i_{0}}(\Delta^{(n+k)}_{0})|}
 \frac{|f^{i_{0}}(\widehat{\Delta}^{(n-1)}_{0})|}{|\widehat{\Delta}^{(n-1)}_{0}|}
 \leq e^{v}.
  \end{equation}
  where $v$ is the total variation of $\log Df.$
  On the other hand the length of intervals $f^{i_{0}}(\widehat{\Delta}^{(n-1)}_{0})$
  and $\widehat{\Delta}^{(n-1)}_{0}$ are $(2e^v+1)$-comparable since
  $$
  f^{i_{0}}(\widehat{\Delta}^{(n-1)}_{0})\subset f^{-q_{n-1}}(\widehat{\Delta}^{(n-1)}_{0})\cup
  \widehat{\Delta}^{(n-1)}_{0}\cup f^{q_{n-1}}(\widehat{\Delta}^{(n-1)}_{0})
  $$ and
  $$
  \widehat{\Delta}^{(n-1)}_{0}\subset f^{-q_{n-1}+i_{0}}(\widehat{\Delta}^{(n-1)}_{0})\cup
  f^{i_{0}}(\widehat{\Delta}^{(n-1)}_{0})\cup f^{q_{n-1}+i_{0}}(\widehat{\Delta}^{(n-1)}_{0}).
  $$
Therefore the length of intervals $\Delta_{0}^{(n+k)}$ and $f^{i_{0}}(\Delta^{(n+k)}_{0})$ are comparable.
This and inequality (\ref{G12020}) imply
$$
\frac{|\Delta^{(n+k)}|}{|\widehat{\Delta}_{0}^{(n-1)}|}\leq C\lambda^{k}\Big(1+\frac{1}{n^{\gamma-1}}\Big)
$$
for $k\geq 1$ and large enough  $n.$
 \end{proof}

\section{Closeness of rescaled points}
Our aim in this section is to show the closeness of rescaled points
of $\xi$ and $h(\xi).$
Let $f$  be a circle diffeomorphism  with a break.
Let $\mathcal{A}_{n}$
be the affine covering map of $f.$
Denote by $\mathfrak{r}_{n}:\widehat{\Delta}_{0}^{(n-1)}\rightarrow [-1, a_{n}]$
 the inverse of $\mathcal{A}_{n}.$
The point  $\mathfrak{r}_{n}(\xi)$ is called rescaled point of $\xi.$
 Next consider two circle diffeomorphisms $f$ and  $\tilde{f}$ with a break and with the identical irrational rotation number.
 Define  the distance between appropriately rescaled points of $\xi$
and $h(\xi):$
$$
\mathfrak{d}_{n}(\xi)=|\mathfrak{r}_{n}(\xi)-\mathfrak{\tilde{r}}_{n}(h(\xi))|.
$$
We have
\begin{lem}\label{Closeness}
Let $f$ and $\tilde{f}$ satisfy the assumptions  of Theorem \ref{Main}.
 Then for any $\alpha\in (0, \gamma)$
there exist   $\kappa=\kappa(f,\tilde{f})>1,$ $C=C(f,\tilde{f})>0$
and $N_{0}=N_{0}(f, \tilde{f})\in \mathbb{N}$
such that
$$
\mathfrak{d}_{n}(\xi)\leq \frac{C}{n^{\gamma-\alpha}}
$$
for all  $\xi \in \Xi^{*}_{\ell}\cap \widehat{\Delta}_{0}^{(n-1)}$
provided $n\leq \ell \leq n+[\alpha \log_{\kappa}n]$ for $n\geq N_{0}$ where $[\cdot]$ is the integer part of a number.
\end{lem}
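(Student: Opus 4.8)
The plan is to compare the renormalization cocycles of $f$ and $\tilde f$ along the orbit segment connecting $\xi_0$ to $\xi$ inside the renormalization neighborhood, and to propagate the error by a contraction estimate of the type provided by Lemma~\ref{comporision}. The rescaled point $\mathfrak{r}_n(\xi)$ is, by definition of $\mathcal A_n$, a ratio of lengths: if $\xi\in\Delta^{(n-1)}_0$ then $\mathfrak{r}_n(\xi)=-|[\,f^{q_{n-1}}(\xi_0),\xi\,]|/|\Delta^{(n-1)}_0|$, and similarly on $\Delta^{(n)}_0$. Thus $\mathfrak{d}_n(\xi)$ is controlled by the discrepancy between the length ratios $|[\eta,\xi]|/|\widehat\Delta^{(n-1)}_0|$ for $f$ and the corresponding ratios for $\tilde f$, where $\eta$ runs over endpoints in $\Xi^*_\ell$. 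The key point is that for $\xi\in\Xi^*_\ell\cap\widehat\Delta^{(n-1)}_0$ with $\ell$ in the stated range, $\xi$ is an endpoint of an element $\Delta^{(\ell)}$ (or $\Delta^{(\ell-1)}$) of the dynamical partition contained in $\widehat\Delta^{(n-1)}_0$, so by Lemma~\ref{comporision} the interval $[\,\cdot\,,\xi\,]$ is a union of at most $O(1)$ (uniformly in $\ell$, using $s(\rho)=m$) elements of $\mathcal P_\ell$ of total relative length $\le C\lambda^{\ell-n}(1+n^{1-\gamma})$.

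First I would set up the inductive step in $n$: express $\mathfrak{r}_{n+1}(\xi)$ (resp. $\mathfrak{\tilde r}_{n+1}$) in terms of $\mathfrak{r}_n$ via the renormalization maps $(f_n,g_n)$ and $(\tilde f_n,\tilde g_n)$, using the commutation relation (\ref{F0}): if $\xi'=f^{q_m}(\xi)$ is the corresponding orbit point one passage deeper, then the rescaled coordinate of $\xi'$ for $f$ is obtained by applying $f_n$ or $g_n$ to the rescaled coordinate of $\xi$, and likewise $h(\xi')=\tilde f^{q_m}(h(\xi))$ corresponds to applying $\tilde f_n$ or $\tilde g_n$ to $\mathfrak{\tilde r}_n(h(\xi))$. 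Hence
\[
\mathfrak{d}_{n+1}(\xi')\le \|Df_n\|_{C^0}\,\mathfrak{d}_n(\xi)+\|f_n-\tilde f_n\|_{C^0}+(\text{similar }g\text{-terms}),
\]
and by Lemma~\ref{bounded}, Corollary~\ref{corollary2020} the derivative factor is bounded, while by Theorem~\ref{renormcon} the gap $\|f_n-\tilde f_n\|_{C^0}\le C n^{-\gamma}$. Iterating this a number of steps comparable to $\ell-n\le[\alpha\log_\kappa n]$ produces a factor like $B^{\alpha\log_\kappa n}=n^{\alpha\log_\kappa B}$ multiplying both the accumulated renormalization-gap sum (which is $\le C n^{-\gamma}\cdot\ell\le C n^{1-\gamma}$) and an initial term $\mathfrak{d}_n$. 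Choosing $\kappa$ large enough relative to the bound $B$ makes $\alpha\log_\kappa B<\alpha$, so the total is $\le C n^{\alpha-\gamma}$ — but this requires a good a~priori starting bound $\mathfrak{d}_n(\xi)$ at depth exactly $n$, namely $\mathfrak{d}_n=O(1)$ or better, which follows because at depth $n$ the rescaled points lie in $[-1,a_n]$ and we only need to absorb the $O(1)$ initial discrepancy into the $n^{\alpha-\gamma}$ budget after the contraction, i.e. we must instead run the iteration \emph{downward} starting from a coarse scale where $f$ and $\tilde f$ already agree to order $n^{-\gamma}$.

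The cleaner route, which I would actually carry out, reverses the roles: fix $\xi\in\Xi^*_\ell\cap\widehat\Delta^{(n-1)}_0$ and write the rescaled coordinate at level $n$ as a ratio of lengths of unions of $\mathcal P_\ell$-intervals, then track how this ratio is built by descending from level $\ell$ back to level $n$. Each level-down step multiplies lengths by renormalization derivatives $Df_j$, and the accumulated multiplicative discrepancy between the $f$- and $\tilde f$-products over the $\ell-n\le[\alpha\log_\kappa n]$ steps is, by $\|D^2f_j-D^2\tilde f_j\|_{C^0}\le Cj^{\gamma-1}$ type bounds from Theorem~\ref{renormcon} together with the uniform lower bound on $Df_j$ from Corollary~\ref{corollary2020}, of size $\sum_{j=n}^{\ell}Cj^{-\gamma}\cdot(\text{bounded products})\le C n^{1-\gamma}\cdot n^{\alpha\log_\kappa B}$; the extra smallness $\lambda^{\ell-n}$ from Lemma~\ref{comporision} in the \emph{length} of the interval $[\cdot,\xi]$ itself is what converts $n^{1-\gamma}$ into $n^{-(\gamma-\alpha)}$ once $\kappa$ is chosen with $\log_\kappa B<1$. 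The main obstacle — and where the condition $(d)$ on the break size enters through the exponent $\lambda<1$ and through keeping the ratio-of-lengths estimates algebraic rather than merely subexponential — is controlling the product of the $Df_j$'s over a number of steps that grows like $\log n$: one needs the per-step distortion bound to be $O(j^{-\gamma})$ \emph{and} summable against the $\log n$-fold product of uniformly-bounded-but-not-close-to-one derivatives, so that the product stays polynomially bounded in $n$ with an exponent strictly smaller than $\gamma-\alpha$ can absorb. Making the choice of $\kappa$ (depending on $\mathfrak c$, hence on $c$ and $m$ via $(d)$) explicit so that $n^{\alpha\log_\kappa B}\cdot n^{1-\gamma}\le n^{\alpha-\gamma}$ is the delicate bookkeeping step; everything else is a routine iteration of the commutation relation (\ref{F0}) and the renormalization convergence of Theorem~\ref{renormcon}.
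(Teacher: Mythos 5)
Your overall strategy -- compare rescaled orbits of $f$ and $\tilde f$ level by level between scales $n$ and $\ell$, use Theorem \ref{renormcon} for a per-level error $Cn^{-\gamma}$, bound the per-level amplification by renormalization derivatives, and let the restriction $\ell-n\leq[\alpha\log_\kappa n]$ turn the exponential amplification into $n^{\alpha}$ -- is the strategy of the paper. But the execution has genuine gaps at exactly the points where the estimate is won. First, your base case is wrong in spirit: you appeal to ``a coarse scale where $f$ and $\tilde f$ already agree to order $n^{-\gamma}$,'' but no such a priori agreement of rescaled orbits exists at a coarse scale; the correct anchor is at the \emph{finest} level $\ell$, where $\Xi^{*}_{\ell}\cap\widehat{\Delta}^{(\ell-1)}_{0}$ consists only of the four points $\xi_{0},\xi_{q_{\ell}},\xi_{q_{\ell-1}},\xi_{q_{\ell}+q_{\ell-1}}$, whose discrepancies are literally $0$, $|f_{\ell}(0)-\tilde f_{\ell}(0)|$ or $|f_{\ell}(-1)-\tilde f_{\ell}(-1)|$, hence $\leq C\ell^{-\gamma}$ by Theorem \ref{renormcon}; this gives $\mathfrak{q}_{\ell}\leq C\ell^{-\gamma}$ and the recursion is then run \emph{downward} in the level index. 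The paper's recursion $\mathfrak{q}_{n}\leq\kappa\,\mathfrak{q}_{n+1}+Cn^{-\gamma}$ is obtained by the thread decomposition of Lemma \ref{lemma1} (the points $\xi_{l+sq_{n}+q_{n-1}}$, $0\leq s\leq k_{n+1}$, with the cases $s=0$ and $s=k_{n+1}$ handled separately through $f_{n-1}$ and $f_{n+1}$), and the per-level factor $\kappa=\mathfrak{c}^{2m}$ comes from the explicit bound $\prod_{i}Df_{n}(\mathfrak{r}^{i})\,\bigl(a_{n}Df_{n-1}(\mathfrak{r}^{0})\bigr)\leq c_{n}^{2(s+1)}+Cn^{-\gamma}$ of Lemmas \ref{A1B1}--\ref{A1B2}; iterating gives $\mathfrak{q}_{n}\leq C\kappa^{\ell-n}n^{-\gamma}\leq Cn^{\alpha-\gamma}$. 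None of this uses Lemma \ref{comporision}, which only enters later (in Lemma \ref{Lambda}) to pass from partition points to arbitrary points.

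Second, the quantitative mechanism you actually propose does not close. The claim that for $\xi\in\Xi^{*}_{\ell}\cap\widehat{\Delta}^{(n-1)}_{0}$ the interval $[\,\cdot\,,\xi\,]$ is a union of $O(1)$ elements of $\mathcal{P}_{\ell}$ of total relative length $\leq C\lambda^{\ell-n}$ is false: such a point can sit at order-one relative distance from the endpoints of $\widehat{\Delta}^{(n-1)}_{0}$ (e.g.\ $\xi_{q_{n}}$ at position $a_{n}$), and the number of level-$\ell$ intervals to its left can grow like $(m+1)^{\ell-n}$; so the ``extra smallness $\lambda^{\ell-n}$'' cannot be attached to the discrepancy of rescaled coordinates, and the intended conversion of $n^{1-\gamma}$ into $n^{-(\gamma-\alpha)}$ has no basis. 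Relatedly, your final bookkeeping $n^{\alpha\log_{\kappa}B}\cdot n^{1-\gamma}\leq n^{\alpha-\gamma}$ would require $\alpha\log_{\kappa}B\leq\alpha-1$, which is impossible for $\alpha\leq 1$ (the lemma is claimed for all $\alpha\in(0,\gamma)$), and the error accumulation is in any case a geometric sum $\sum_{j=n}^{\ell-1}\kappa^{\,j-n}j^{-\gamma}\leq C\kappa^{\ell-n}n^{-\gamma}$, not $n^{1-\gamma}$ times a product. If instead you simply set $\kappa$ equal to your per-level amplification bound and ran the induction downward from the four-point base case at level $\ell$, you would recover the paper's argument; as written, the proposal leaves both the base case and the exponent arithmetic -- the two decisive steps -- unestablished.
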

\begin{proof}
 It is easy to verify that
$\Xi^{*}_{\ell}\cap \widehat{\Delta}_{0}^{(n-1)}=\{\xi_{q_{n-1}},
\xi_{q_{n}+q_{n-1}}, \xi_{0}, \xi_{q_{n}}\}$ for $\ell=n.$ One can easily see that
$\mathfrak{d}_{n}(\xi_{q_{n-1}})=\mathfrak{d}_{n}(\xi_{0})=0,$
$\mathfrak{d}_{n}(\xi_{q_{n}+q_{n-1}})=|f_{n}(-1)-\tilde{f}_{n}(-1)|$
and $\mathfrak{d}_{n}(\xi_{q_{n}})=|f_{n}(0)-\tilde{f}_{n}(0)|.$
Hence by Theorem \ref{renormcon} we get
\begin{equation}\label{H1}
\underset{\xi\in \Xi^{*}_{n}\cap \widehat{\Delta}_{0}^{(n-1)}}{\max}\mathfrak{d}_{n}(\xi)\leq \frac{C}{n^{\gamma}}
\end{equation}
for large enough $n.$
For fixed $\ell>n$ let us denote
$\mathfrak{q}_{n}=\max_{\xi\in \Xi^{*}_{\ell}\cap \widehat{\Delta}_{0}^{(n-1)}}\mathfrak{d}_{n}(\xi).$
The obvious equality
$
\mathfrak{d}_{n}(\xi)=|f_{n}(0)\mathfrak{r}_{n+1}(\xi)-\tilde{f}_{n}(0)\mathfrak{\tilde{r}}_{n+1}(h(\xi))|
$
and Theorem \ref{renormcon} imply
\begin{equation}\label{H2}
\mathfrak{d}_{n}(\xi)\leq a_{n}\mathfrak{d}_{n+1}(\xi)+
\frac{C}{n^{\gamma}}
\end{equation}
 if $\xi\in \Xi^{*}_{\ell}\cap \widehat{\Delta}_{0}^{(n)}$ and $n$ is large,
 where $a_{n}=f_{n}(0)=|\Delta^{(n)}_{0}|/|\Delta^{(n-1)}_{0}|.$
 Let $\xi\in \Xi_{\ell}\cap \check{\Delta}^{(n-1)}_{0}.$
 Consider an arbitrary thread in the decomposition (\ref{J1}) and denote
 $\eta_{s}=\mathfrak{r}_{n}(\xi_{l+sq_{n}+q_{n-1}}),$
 $\tilde{\eta}_{s}=\mathfrak{\tilde{r}}_{n}(\tilde{\xi}_{l+sq_{n}+q_{n-1}}),$
 for $0\leq s \leq k_{n+1},$ so that
 $\mathfrak{d}_{n}(\xi_{l+sq_{n}+q_{n-1}})=|\eta_{s}-\tilde{\eta}_{s}|$
  where $\tilde{\xi}_{l+sq_{n}+q_{n-1}}=h(\xi_{l+sq_{n}+q_{n-1}}).$
  It is easy to see that  $\eta_{s+1}=f_{n}(\eta_{s})$ and
  $\tilde{\eta}_{s+1}=\tilde{f}_{n}(\tilde{\eta}_{s}).$ First we consider the case $s=0.$
  In this case, it is a simple matter to verify that
\begin{equation}\label{H3}
\begin{split}
\mathfrak{d}_{n}(\xi_{l+q_{n-1}})& =|\eta_{0}-\tilde{\eta}_{0}|=\Big|\frac{\mathfrak{r}_{n-1}(\xi_{l+q_{n-1}})}{f_{n-1}(0)}-
  \frac{\mathfrak{\tilde{r}}_{n-1}(\tilde{\xi}_{l+q_{n-1}})}{\tilde{f}_{n-1}(0)}\Big|\\
 &\leq \frac{\mathfrak{d}_{n-1}(\xi_{l+q_{n-1}})}{f_{n-1}(0)}+
 \Big|\frac{1}{f_{n-1}(0)}-\frac{1}{\tilde{f}_{n-1}(0)}\Big|\Big|\mathfrak{\tilde{r}}_{n-1}(\tilde{\xi}_{l+q_{n-1}})\Big|,\\
 \mathfrak{d}_{n-1}(\xi_{l+q_{n-1}})& =|f_{n-1}(\mathfrak{r}_{n-1}(\xi_{l}))-
  \tilde{f}_{n-1}(\mathfrak{\tilde{r}}_{n-1}(\tilde{\xi}_{l}))|\\
  &\leq Df_{n-1}(\mathfrak{r}^{0})\mathfrak{d}_{n-1}(\xi_{l})+
  |f_{n-1}(\mathfrak{\tilde{r}}_{n-1}(\xi_{l}))-
  \tilde{f}_{n-1}(\mathfrak{\tilde{r}}_{n-1}(\xi_{l}))|,\\
  \mathfrak{d}_{n-1}(\xi_{l})&=
  |f_{n-1}(0)f_{n}(0)\mathfrak{r}_{n+1}(\xi_{l})-\tilde{f}_{n-1}(0)\tilde{f}_{n}(0)\mathfrak{\tilde{r}}_{n+1}(\tilde{\xi}_{l})|\\
  &\leq f_{n-1}(0)f_{n}(0) \mathfrak{d}_{n+1}(\xi_{l})+
  |f_{n-1}(0)f_{n}(0)-\tilde{f}_{n-1}(0)\tilde{f}_{n}(0)||\mathfrak{\tilde{r}}_{n+1}(\tilde{\xi}_{l})|,
  \end{split}
\end{equation}
where $\mathfrak{r}^{0}$ is a point between  $\mathfrak{r}_{n-1}(\xi_{l})$ and $\mathfrak{\tilde{r}}_{n-1}(\tilde{\xi}_{l})$
such that $|f_{n-1}(\mathfrak{r}_{n-1}(\xi_{l}))-
  f_{n-1}(\mathfrak{\tilde{r}}_{n-1}(\tilde{\xi}_{l}))|=Df_{n-1}(\mathfrak{r}^{0})|\mathfrak{r}_{n-1}(\xi_{l})-
  \mathfrak{\tilde{r}}_{n-1}(\tilde{\xi}_{l})|.$
Since the rotation number is bounded type,
  Theorem \ref{renormcon} and inequalities  (\ref{G0a}) and (\ref{H3})
  imply that
\begin{equation}\label{H4}
\begin{split}
\mathfrak{d}_{n}(\xi_{l+q_{n-1}}) =|\eta_{0}-\tilde{\eta}_{0}|&\leq a_{n}
Df_{n-1}(\mathfrak{r}^{0})\mathfrak{d}_{n+1}(\xi_{l})+\frac{C}{n^{\gamma}}
\end{split}
\end{equation}
for $n$ large.
Now consider the case $0<s<k_{n+1}.$ Let $\mathfrak{r}^{s}$ be a point between
$\eta_{s-1}$ and $\tilde{\eta}_{s-1}$ such that
$|f_{n}(\eta_{s-1})-f_{n}(\tilde{\eta}_{s-1})|=Df_{n}(\mathfrak{r}^{s})|\eta_{s-1}- \tilde{\eta}_{s-1}|.$
Then we have
\begin{equation*}
\begin{split}
\mathfrak{d}_{n}(\xi_{l+sq_{n}+q_{n-1}}) =|\eta_{s}-\tilde{\eta}_{s}|&\leq
Df_{n}(\mathfrak{r}^{s})|\eta_{s-1}- \tilde{\eta}_{s-1}|+\frac{C}{n^{\gamma}}
\end{split}
\end{equation*}
for $n$ large.
Iterating into it we get
\begin{equation}\label{H5}
\mathfrak{d}_{n}(\xi_{l+sq_{n}+q_{n-1}}) =|\eta_{s}-\tilde{\eta}_{s}|\leq
\prod_{i=1}^{s}Df_{n}(\mathfrak{r}^{i})|\eta_{0}-\tilde{\eta}_{0}|+\Big(1+\sum_{j=2}^{s}\prod_{i=j}^{s}Df_{n}(\mathfrak{r}^{i})\Big)\frac{C}{n^{\gamma}}
\end{equation}
Since the rotation number is bounded type the expressions $\Big(1+\sum_{j=2}^{s}\prod_{i=j}^{s}Df_{n}(\mathfrak{r}^{i})\Big)$ and $\prod_{i=1}^{s}Df_{n}(\mathfrak{r}^{i})$
are bounded above by a universal constant.
This and  relations  (\ref{H4}) and (\ref{H5}) imply
\begin{equation}\label{H50}
\mathfrak{d}_{n}(\xi_{l+sq_{n}+q_{n-1}})\leq \prod_{i=1}^{s}Df_{n}(\mathfrak{r}^{i})\Big( a_{n} Df_{n-1}(\mathfrak{r}^{0})\Big)  \mathfrak{d}_{n+1}(\xi_{l})+\frac{C}{n^{\gamma}}
\end{equation}
for $n$ large.
Finally, consider the case $s=k_{n+1}.$
In this case, it is easy to see  that
\begin{equation}\label{H6}
\begin{split}
\mathfrak{d}_{n}(\xi_{l+q_{n+1}})& =\Big|\mathfrak{r}_{n}(\xi_{l+q_{n+1}})-
  \mathfrak{\tilde{r}}_{n}(\tilde{\xi}_{l+q_{n+1}})\Big|\\
  &=|f_{n}(0)f_{n+1}(\mathfrak{r}_{n+1}(\xi_{l}))-
  \tilde{f}_{n}(0)\tilde{f}_{n+1}(\mathfrak{\tilde{r}}_{n+1}(\xi_{l}))|\\
  &\leq a_{n} Df_{n+1}(\mathfrak{r}^{k_{n+1}})\mathfrak{d}_{n+1}(\xi_{l})+\frac{C}{n^{\gamma}}
\end{split}
\end{equation}
for large enough $n,$ where $\mathfrak{r}^{k_{n+1}}$ is a point between  $\mathfrak{r}_{n+1}(\xi_{l})$ and $\mathfrak{\tilde{r}}_{n+1}(\tilde{\xi}_{l})$
such that $|f_{n+1}(\mathfrak{r}_{n+1}(\xi_{l}))-
  f_{n+1}(\mathfrak{\tilde{r}}_{n+1}(\tilde{\xi}_{l}))|=Df_{n+1}(\mathfrak{r}^{k_{n+1}})|\mathfrak{r}_{n+1}(\xi_{l})-
  \mathfrak{\tilde{r}}_{n+1}(\tilde{\xi}_{l})|.$
 Combining Lemmas \ref{A1B1} and  \ref{A1B2}
we can easily obtain  that $a_{n}Df_{n-1}(\mathfrak{r}^{0}),$ $a_{n}Df_{n+1}(\mathfrak{r}^{k_{n+1}})\leq c_{n}^{2}+Cn^{-\gamma}$
if $c_{n}>1$ and  $a_{n}Df_{n-1}(\mathfrak{r}^{0}),$ $a_{n}Df_{n+1}(\mathfrak{r}^{k_{n+1}})\leq c_{n}^{-1}+Cn^{-\gamma}$
if $c_{n}<1$ and
\begin{equation}\label{20203}
 \prod_{i=1}^{s}Df_{n}(\mathfrak{r}^{i})\Big( a_{n} Df_{n-1}(\mathfrak{r}^{0})\Big)
 \leq\begin{cases}
               c_{n}^{2(s+1)}+Cn^{-\gamma}, & \mbox{if}\,\,\, c_n>1\\

               c_{n}^{-(s+1)}+Cn^{-\gamma},& \mbox{if}\,\,\, c_n<1
             \end{cases}
\end{equation}
for $n$ large.
Let us denote $\mathfrak{c}=\max\{c, c^{-1}\}$ and $\kappa:=\kappa(c, m)=\mathfrak{c}^{2m}.$
 It follows from the relations (\ref{H2}), (\ref{H4}), (\ref{H50}),(\ref{H6}) and (\ref{20203}) that
\begin{equation}\label{H7}
\mathfrak{q}_{n}\leq \kappa \mathfrak{q}_{n+1}+\frac{C}{n^{\gamma}}
\end{equation}
for $n$ large.
Iterating (\ref{H7}) we get
$$
\mathfrak{q}_{n}\leq \kappa^{\ell-n}\mathfrak{q}_{\ell}+C\sum_{j=n}^{\ell-1}\frac{\kappa^{j-n}}{j^{\gamma}}
$$
for $n$ large.
Inequality (\ref{H1}) implies $\mathfrak{q}_{\ell}\leq C \ell^{-\gamma}.$
Hence
\begin{equation}\label{H8}
\mathfrak{q}_{n}\leq C\sum_{j=n}^{\ell}\frac{\kappa^{j-n}}{j^{\gamma}}\leq \frac{C\kappa^{\ell-n}}{n^{\gamma}}.
\end{equation}
The condition $n\leq \ell \leq n+[\alpha\log_{\kappa}n]$ makes it obvious that
$$
\mathfrak{q}_{n}\leq \frac{C}{n^{\gamma-\alpha}}
$$
for large enough $n.$
Lemma \ref{Closeness} is proved.
 \end{proof}
\section{Proof of main theorem}
In this section we prove our main theorem. For this,
first we prove a preparatory lemma and then we prove $C^{1}$-smoothness
of the conjugacy. Finally, we prove  $C^{1+\omega_{\gamma}}$-smoothness
of the conjugacy.
\subsection{Preparatory lemma}
We begin by proving  the following lemma.
\begin{lem}\label{Lambda}
Let $f$ and $\tilde{f}$ satisfy the assumptions of Theorem \ref{Main}.
 Then there exists a constant $C:=C(f, \tilde{f})>0$ and a natural number $N_{0}:=N_{0}(f, \tilde{f})$
 such that
 $$
 \Lambda_{n}\leq \frac{C}{n^{\frac{\gamma}{2}}}
 $$
 for all $n\geq N_{0}.$
\end{lem}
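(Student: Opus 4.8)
The plan is to estimate $\Lambda_n(x)=\log Df^{q_n}(x)-\log D\tilde f^{q_n}(h(x))$ by passing to the renormalization picture. Recall that $Df^{q_n}$ restricted to $\widehat\Delta^{(n-1)}_0$ is, after conjugation by the affine map $\mathcal A_n$, precisely the derivative of the renormalized map: more precisely, for $\xi\in\Delta^{(n-1)}_0$ with $\mathcal A_n(z)=\xi$ one has $Df^{q_n}(\xi)=Df_n(z)$, and similarly for $\tilde f$ with $\tilde f_n$, $\tilde{\mathcal A}_n$ and $\tilde z=\mathfrak{\tilde r}_n(h(\xi))$. Hence
$$
\Lambda_n(\xi)=\log Df_n(z)-\log D\tilde f_n(\tilde z).
$$
I would split this as $\big(\log Df_n(z)-\log Df_n(\tilde z)\big)+\big(\log Df_n(\tilde z)-\log D\tilde f_n(\tilde z)\big)$. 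The second term is controlled by Theorem~\ref{renormcon}: since $Df_n$ is bounded below away from $0$ (Corollary~\ref{corollary2020}) and $\|D^2 f_n-D^2\tilde f_n\|_{C^0}\le C/n^{\gamma-1}$ while $\|f_n-\tilde f_n\|_{C^1}\le C/n^\gamma$, one gets $|\log Df_n(\tilde z)-\log D\tilde f_n(\tilde z)|\le C/n^\gamma$, which is far better than needed. The first term is bounded using the $C^2$-bound of Lemma~\ref{bounded}: $|\log Df_n(z)-\log Df_n(\tilde z)|\le C\|f_n\|_{C^2}\,|z-\tilde z|\le C\,\mathfrak d_n(\xi)$, where $\mathfrak d_n(\xi)=|\mathfrak r_n(\xi)-\mathfrak{\tilde r}_n(h(\xi))|$ is exactly the quantity controlled in Lemma~\ref{Closeness}.

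The point is then to choose the parameters in Lemma~\ref{Closeness} appropriately. That lemma gives, for any $\alpha\in(0,\gamma)$, constants $\kappa>1$, $C$, $N_0$ so that $\mathfrak d_n(\xi)\le C/n^{\gamma-\alpha}$ for all $\xi\in\Xi^*_\ell\cap\widehat\Delta^{(n-1)}_0$ whenever $n\le\ell\le n+[\alpha\log_\kappa n]$. Taking $\alpha=\gamma/2$ yields $\mathfrak d_n(\xi)\le C/n^{\gamma/2}$ for all $\xi\in\Xi^*_\ell\cap\widehat\Delta^{(n-1)}_0$ with $\ell$ in that range. Since $\Lambda_n=\max_{x\in\widehat\Delta^{(n-1)}_0}|\Lambda_n(x)|$ and $\Lambda_n(\cdot)$ is continuous (as noted after its definition, using break-equivalence — here $f$ and $\tilde f$ are break-equivalent via $h$, which I would justify at the start of the proof, or observe it suffices to bound $\Lambda_n$ at the endpoints from $\Xi^*_m$ and pass to a supremum/limit), the maximum of $|\Lambda_n|$ over the closed segment is attained and can be approximated by values at points of $\Xi^*_m\cap\widehat\Delta^{(n-1)}_0$ for $m$ large. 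For any prescribed $n\ge N_0$ I would fix some admissible $\ell$ (e.g.\ $\ell=n$, which already is admissible and suffices if $\Lambda_n$ is determined by finitely many endpoint evaluations) — more safely, use that $\mathfrak d_n$ is continuous in the interior of partition elements and that $\Xi^*_m\cap\widehat\Delta^{(n-1)}_0$ becomes dense in $\widehat\Delta^{(n-1)}_0$ as $m\to\infty$, together with a uniform (in $m$) bound for $m$ in the range $n\le m\le n+[\tfrac{\gamma}{2}\log_\kappa n]$. Combining with the two displayed estimates gives $\Lambda_n\le C\,\mathfrak d_n+C/n^\gamma\le C/n^{\gamma/2}$ for all $n\ge N_0$, which is the claim.

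I expect the main obstacle to be the bookkeeping needed to legitimately transfer the pointwise bound on $\mathfrak d_n$ at the discrete set $\Xi^*_\ell$ to the uniform bound $\Lambda_n=\sup_{\widehat\Delta^{(n-1)}_0}|\Lambda_n|$: one must verify that the supremum of $|\Lambda_n(\cdot)|$ over the whole renormalization neighborhood is controlled by its values at rescaled points that are covered by Lemma~\ref{Closeness}, using continuity of $\Lambda_n$ together with the density of $\bigcup_{m}\Xi^*_m$ and the fact that the bound in Lemma~\ref{Closeness} is uniform over the allowed window $n\le\ell\le n+[\tfrac{\gamma}{2}\log_\kappa n]$ (and that as $m\to\infty$ every interior point of $\widehat\Delta^{(n-1)}_0$ is a limit of points $\xi_l\in\Xi^*_m$ whose rescaled distances $\mathfrak d_n(\xi_l)$ are all $\le C/n^{\gamma/2}$, since they belong to the relevant window for this fixed $n$). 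The rest — the two Taylor/mean-value estimates above and the invocation of Corollary~\ref{corollary2020}, Lemma~\ref{bounded} and Theorem~\ref{renormcon} — is routine.
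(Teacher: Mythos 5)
Your first half matches the paper: the same triangle-inequality decomposition of $\Lambda_n(\xi)$ into $|\log Df_n(\mathfrak r_n(\xi))-\log Df_n(\tilde{\mathfrak r}_n(h(\xi)))| + |\log Df_n-\log D\tilde f_n|$ evaluated at the rescaled point, with the second term of order $n^{-\gamma}$ by Theorem \ref{renormcon} and the first term bounded by $C\,\mathfrak d_n(\xi)$ via Lemma \ref{bounded}. The gap is in how you pass from the bound of Lemma \ref{Closeness} to the supremum over all of $\widehat\Delta^{(n-1)}_0$. For a fixed $n$, Lemma \ref{Closeness} only covers points of $\Xi^*_\ell$ with $\ell\le n+[\tfrac{\gamma}{2}\log_\kappa n]$; since the sets $\Xi^*_\ell$ are increasing in $\ell$, this is the single \emph{finite} set $\Xi^*_{n+[\frac{\gamma}{2}\log_\kappa n]}\cap\widehat\Delta^{(n-1)}_0$, which is not dense. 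Your claim that as $m\to\infty$ every interior point is a limit of points $\xi_l\in\Xi^*_m$ ``belonging to the relevant window for this fixed $n$'' is false: for $m>n+[\tfrac{\gamma}{2}\log_\kappa n]$ the new points of $\Xi^*_m$ lie outside the window, and the lemma gives nothing for them (its proof produces the factor $\kappa^{\ell-n}$, which is only harmless when $\ell-n\lesssim \alpha\log_\kappa n$). Continuity of $\Lambda_n$ alone cannot bridge this, because it carries no quantitative modulus between the finitely many admissible points; your fallback ``$\ell=n$ suffices'' is even weaker, since $\Xi^*_n\cap\widehat\Delta^{(n-1)}_0$ consists of four points.

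The missing ingredient is the paper's interpolation step: for arbitrary $\xi\in\widehat\Delta^{(n-1)}_0$, let $r_n(\xi)$ be the right endpoint of the element of $\mathcal P_{n+[\frac{\gamma}{2}\log_\kappa n]}$ containing $\xi$; then
$\mathfrak d_n(\xi)\le \mathfrak d_n(r_n(\xi)) + |\Delta^{(n+[\frac{\gamma}{2}\log_\kappa n])}(\xi)|/|\Delta^{(n-1)}_0| + |\tilde\Delta^{(n+[\frac{\gamma}{2}\log_\kappa n])}(h(\xi))|/|\tilde\Delta^{(n-1)}_0|$,
where the endpoint term is handled by Lemma \ref{Closeness}, and the two length ratios are bounded by $C\lambda^{\frac{\gamma}{2}\log_\kappa n}$ via Lemma \ref{comporision}. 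It is exactly here that hypothesis (d) of Theorem \ref{Main} enters: it guarantees $\lambda^{-1}>\kappa$, i.e.\ $\log_\kappa(1/\lambda)>1$, so $\lambda^{\frac{\gamma}{2}\log_\kappa n}\le n^{-\gamma/2}$. Your proposal never uses Lemma \ref{comporision} or condition (d), which the paper's remark explicitly identifies as the device for estimating ratios of segments whose depths differ by $\sim\log n$; without this step the uniform bound $\Lambda_n\le C n^{-\gamma/2}$ does not follow.
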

\begin{proof}
One can see that
\begin{equation}\label{L0}
\begin{split}
|\Lambda_{n}(\xi)|&=|\log Df^{q_{n}}(\xi)-\log D\tilde{f}^{q_{n}}(h(\xi))|=|\log Df_{n}(\mathfrak{r}_{n}(\xi))-\log D\tilde{f}_{n}(\tilde{\mathfrak{r}}_{n}(\tilde{\xi}))|\\
&\leq |\log Df_{n}(\mathfrak{r}_{n}(\xi))-\log Df_{n}(\tilde{\mathfrak{r}}_{n}(\tilde{\xi}))|+
|\log Df_{n}(\tilde{\mathfrak{r}}_{n}(\tilde{\xi}))-\log D\tilde{f}_{n}(\tilde{\mathfrak{r}}_{n}(\tilde{\xi}))|\\
&\leq \|D\log Df_{n}\|_{C^{0}([-1,0])}\mathfrak{d}_{n}(\xi)+\frac{1}{\inf D\tilde{f}_{n}}
\|Df_{n}-D\tilde{f}_{n}\|_{C^{0}([-1,0])}.
\end{split}
\end{equation}
By Lemma \ref{bounded}
we have $\|D\log Df_{n}\|_{C^{0}([-1,0])}\leq Q.$
Denjoy's inequality implies $(\inf D\tilde{f}_{n})^{-1}\leq e^{v_{f}}.$
From Theorem \ref{renormcon}
it follows that $\|Df_{n}-D\tilde{f}_{n}\|_{C^{0}([-1,0])}\leq C n^{-\gamma}$
for $n$ large.
Next we estimate $\mathfrak{d}_{n}(\xi)$ on $\widehat{\Delta}_{0}^{(n-1)}.$
First we assume that $\xi\in\Xi^{*}_{n+[\frac{\gamma}{2}\log_{\kappa}n]}\cap \widehat{\Delta}_{0}^{(n-1)}.$
Then, if we choose $\alpha=\gamma/2$ in  Lemma \ref{Closeness}
then for large enough $n,$ the function $\mathfrak{d}_{n}(\xi)$ can be estimated as follows
\begin{equation}\label{L1}
\mathfrak{d}_{n}(\xi)\leq \frac{C}{n^{\frac{\gamma}{2}}}.
\end{equation}
Let $\xi$ be any point of $\widehat{\Delta}_{0}^{(n-1)}.$
Denote by $\Delta^{(n+[\frac{\gamma}{2}\log_{\kappa}n])}(\xi)$ the segment  of
$\mathcal{P}_{n+[\frac{\gamma}{2}\log_{\kappa}n]}$ containing the point $\xi$
and $r_{n}(\xi):=r_{n+[\frac{\gamma}{2}\log_{\kappa}n]}(\xi)$ the right endpoint of
$\Delta^{(n+[\frac{\gamma}{2}\log_{\kappa}n])}(\xi).$
A trivial reasoning shows that
\begin{equation}\label{L2}
\begin{split}
\mathfrak{d}_{n}(\xi)&=|\mathfrak{r}_{n}(\xi)-\mathfrak{\tilde{r}}_{n}(h(\xi))|\\
&\leq \Big|\frac{\xi-r_{n}(\xi)}{|\Delta^{(n-1)}_{0}|} -\frac{h(\xi)-h(r_{n}(\xi))}
{|\tilde{\Delta}^{(n-1)}_{0}|}\Big|+\mathfrak{d}_{n}(r_{n}(\xi))\\
&\leq \frac{|\Delta^{(n+[\frac{\gamma}{2}\log_{\kappa}n])}(\xi)|}{|\Delta^{(n-1)}_{0}|} +
\frac{|\tilde{\Delta}^{(n+[\frac{\gamma}{2}\log_{\kappa}n])}(h(\xi))|}{|\tilde{\Delta}^{(n-1)}_{0}|}+
\mathfrak{d}_{n}(r_{n}(\xi))
\end{split}
\end{equation}
where $\tilde{\Delta}^{(n+[\frac{\gamma}{2}\log_{\kappa}n])}(h(\xi))$ the segment  of
$\mathcal{\tilde{P}}_{n+[\frac{\gamma}{2}\log_{\kappa}n]}:=\mathcal{\tilde{P}}_{n+[\frac{\gamma}{2}\log_{\kappa}n]}(h(\xi_{0}), \tilde{f})$
containing the point $h(\xi).$
By (\ref{L1}) we have
\begin{equation}\label{L3}
\mathfrak{d}_{n}(r_{n}(\xi))\leq \frac{C}{n^{\frac{\gamma}{2}}}.
\end{equation}
It follows easily from Lemma  \ref{comporision}  that
\begin{equation}\label{L4}
\frac{|\Delta^{(n+[\frac{\gamma}{2}\log_{\kappa}n])}(\xi)|}{|\Delta^{(n-1)}_{0}|}\leq C
\lambda^{\frac{\gamma}{2}\log_{\kappa}n}\Big(1+\frac{1}{n^{\gamma-1}}\Big),
\end{equation}
and
\begin{equation}\label{L5}
\frac{|\tilde{\Delta}^{(n+[\frac{\gamma}{2}\log_{\kappa}n])}(h(\xi))|}{|\tilde{\Delta}^{(n-1)}_{0}|}\leq
C\lambda^{\frac{\gamma}{2}\log_{\kappa}n}\Big(1+\frac{1}{n^{\gamma-1}}\Big).
\end{equation}
One can see that
 \begin{equation}\label{L6}
\lambda^{\frac{\gamma}{2}\log_{\kappa}n}
=\Big(\frac{1}{n^{\frac{\gamma}{2}}}\Big)^{\log_{\kappa}\frac{1}{\lambda}}
 \end{equation}
Hypothesis $(d)$ of Theorem \ref{Main}
implies that $\lambda^{-1}> \kappa.$ Hence
 $$
 \log_{\kappa}\frac{1}{\lambda}>1.
 $$
This implies
\begin{equation}\label{L7}
\lambda^{\frac{\gamma}{2}\log_{\kappa}n}
\leq\frac{1}{n^{\frac{\gamma}{2}}}
 \end{equation}
Combining (\ref{L0})-(\ref{L7}) we conclude that
$$
 \Lambda_{n}\leq \frac{C}{n^{\frac{\gamma}{2}}}
 $$
 for large enough $n.$ Lemma \ref{Lambda} is proved.
\end{proof}
\subsection{$C^{1}$-smoothness of conjugacy}
By the hypotheses of Theorem \ref{Main}  the rotation number of $f$ and $\tilde{f}$ is bounded type
and $\gamma>2.$ Lemma \ref{Lambda}  implies that
$$
\sum_{n=0}^{\infty}k_{n+1}\Lambda_{n}<\infty.
$$
Therefore, it follows from Theorem \ref{criteria} that
 the cohomological equation (\ref{F0a}) has a continuous solution
 $\zeta.$
Next we prove the following lemma.
\begin{lem}\label{c1 smoothness}
There exists $\beta>0$ such that
$$
Dh(\xi)=\beta e^{\zeta(\xi)},\,\,\,\,\,\,\,\,\,\,\,
\text{for all}\,\,\,\,\,\,\,\,\,\, \xi\in \mathbb{S}^{1}.
$$
\end{lem}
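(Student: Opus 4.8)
The plan is to pin down the constant $\beta$ as a limit of ratios of fundamental segments, prove that $h$ is differentiable at the break point $\xi_{0}$, propagate this along the dense orbit $\Xi=\{f^{i}(\xi_{0})\}$ via the conjugacy cocycle, and finally upgrade to a genuine $C^{1}$ statement on all of $\mathbb{S}^{1}$ using the polynomial convergence of renormalizations. For Step~1, note first that $\zeta(\xi_{0})=0$, since $\zeta_{n}(\xi_{0})=0$ for all $n$ in the proof of Theorem~\ref{criteria}. Put $\delta_{n}=|\Delta^{(n-1)}_{0}|$ and $\tilde\delta_{n}=|\tilde\Delta^{(n-1)}_{0}|$, so that up to orientation $\mathcal{A}_{n}(z)=\xi_{0}+\delta_{n}z$ and likewise for $\tilde{\mathcal{A}}_{n}$, and by definition $\delta_{n+1}/\delta_{n}=a_{n}$, $\tilde\delta_{n+1}/\tilde\delta_{n}=\tilde a_{n}$. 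Since $\rho$ is of bounded type we have $\rho\in M_{e}\cap M_{o}$, so Theorem~\ref{renormcon} gives $|a_{n}-\tilde a_{n}|=|f_{n}(0)-\tilde f_{n}(0)|\le Cn^{-\gamma}$, while Lemma~\ref{A1B1} gives $a_{n}>\varepsilon$. Hence $\log(\tilde\delta_{n+1}/\delta_{n+1})-\log(\tilde\delta_{n}/\delta_{n})=\log(\tilde a_{n}/a_{n})=O(n^{-\gamma})$, a summable series because $\gamma>2$, so $\beta:=\lim_{n\to\infty}\tilde\delta_{n}/\delta_{n}$ exists with $0<\beta<\infty$.

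For Step~2, I would use the affine charts: for $\xi\in\widehat\Delta^{(n-1)}_{0}$ one has $\frac{h(\xi)-\xi_{0}}{\xi-\xi_{0}}=\frac{\tilde\delta_{n}}{\delta_{n}}\cdot\frac{\tilde{\mathfrak r}_{n}(h(\xi))}{\mathfrak r_{n}(\xi)}=\frac{\tilde\delta_{n}}{\delta_{n}}\bigl(1+\frac{\pm\mathfrak d_{n}(\xi)}{\mathfrak r_{n}(\xi)}\bigr)$. Taking $\alpha=\gamma/2$, the estimates (\ref{L1})--(\ref{L7}) obtained inside the proof of Lemma~\ref{Lambda} show $\mathfrak d_{n}(\xi)\le Cn^{-\gamma/2}$ for \emph{every} $\xi\in\widehat\Delta^{(n-1)}_{0}$; and if $\xi\in\widehat\Delta^{(n-1)}_{0}\setminus\widehat\Delta^{(n)}_{0}$ then $|\mathfrak r_{n}(\xi)|\ge\varepsilon^{2}$ by the lower bound $a_{n}>\varepsilon$ of Lemma~\ref{A1B1}. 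Letting $\xi\to\xi_{0}$ along such shells (so $n\to\infty$) yields $\frac{h(\xi)-\xi_{0}}{\xi-\xi_{0}}\to\beta$, i.e. $h$ is differentiable at $\xi_{0}$ with $Dh(\xi_{0})=\beta=\beta e^{\zeta(\xi_{0})}$.

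Step~3 is immediate: from $h\circ f^{k}=\tilde f^{k}\circ h$ and the $C^{1}$-smoothness of $f^{k},\tilde f^{k}$ near $\xi_{0}$, respectively $h(\xi_{0})$, differentiation at $\xi_{0}$ shows $h$ is differentiable at $\xi_{k}=f^{k}(\xi_{0})$ with $Dh(\xi_{k})=\frac{D\tilde f^{k}(\xi_{0})}{Df^{k}(\xi_{0})}Dh(\xi_{0})$; since $\zeta(\xi_{k})-\zeta(\xi_{0})=\log D\tilde f^{k}(h(\xi_{0}))-\log Df^{k}(\xi_{0})$ and $\zeta(\xi_{0})=0$, this reads $Dh(\xi_{k})=\beta e^{\zeta(\xi_{k})}$. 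Thus $Dh=\beta e^{\zeta}$ on the dense set $\Xi$.

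Step~4 is the hard part: one must make the difference quotients of $h$ converge to $\beta e^{\zeta}$ uniformly enough to conclude $h\in C^{1}(\mathbb{S}^{1})$ with $Dh=\beta e^{\zeta}$. Given $a\in\mathbb{S}^{1}$ and $b$ near $a$, I would use bounded type to pick $m=m(a,b)\to\infty$ and the first--entrance time $i_{m}=i_{m}(a)$ into $\widehat\Delta^{(m-1)}_{0}$ (as in the proof of Theorem~\ref{criteria}) so that $f^{i_{m}}$ carries $[a,b]$, with bounded distortion, onto a subinterval of $\widehat\Delta^{(m-1)}_{0}$ of length $\asymp\delta_{m}$. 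Writing $a^{*}=f^{i_{m}}(a)$, $b^{*}=f^{i_{m}}(b)$, one factors $\frac{h(b)-h(a)}{b-a}=\frac{h(b^{*})-h(a^{*})}{b^{*}-a^{*}}\cdot E$, where the first factor equals $\frac{\tilde\delta_{m}}{\delta_{m}}(1+O(m^{-\gamma/2}))\to\beta$ exactly as in Step~2 (the two rescaled points are now at distance $\asymp1$), and $E=(\tfrac{f^{i_{m}}(b)-f^{i_{m}}(a)}{b-a})/(\tfrac{\tilde f^{i_{m}}(h(b))-\tilde f^{i_{m}}(h(a))}{h(b)-h(a)})$. By the definition of $\zeta_{m}$ one can write $E=e^{\zeta_{m}(a)}R_{m}$ with $e^{\zeta_{m}(a)}\to e^{\zeta(a)}$, where $R_{m}$ is the ratio of the ``shape factor'' $\frac1{b-a}\int_{a}^{b}Df^{i_{m}}/Df^{i_{m}}(a)$ of $f^{i_{m}}$ over $[a,b]$ to the analogous shape factor of $\tilde f^{i_{m}}$ over $[h(a),h(b)]$. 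The crux — and the step I expect to be the main obstacle — is $R_{m}\to1$: one cannot bound numerator and denominator of $R_{m}$ separately, since the distortion of $f^{i_{m}}$ over an interval of relative size $\asymp1$ inside an element of $\mathcal{P}_{m}$ does not tend to $1$ (it has a nontrivial M\"obius-type limit, by the break renormalization theory). The vanishing must instead come from comparing the two maps — decomposing the $i_{m}$ iterations along the renormalization tower and estimating the accumulated discrepancy using the renormalization closeness $\|f_{n}-\tilde f_{n}\|_{C^{1}}\le Cn^{-\gamma}$ of Theorem~\ref{renormcon} together with the uniform bound $\|f_{n}\|_{C^{2}}\le Q$ of Lemma~\ref{bounded}. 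Once $R_{m}\to1$ is established with a rate uniform in $a$, the difference quotients tend to $\beta e^{\zeta(a)}$ uniformly; since $\beta e^{\zeta}$ is continuous and agrees with $Dh$ on the dense set $\Xi$, this forces $h$ to be everywhere differentiable with $Dh(\xi)=\beta e^{\zeta(\xi)}$, which is the assertion of the lemma.
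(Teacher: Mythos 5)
Your Steps 1--3 coincide with the paper's argument: the paper defines $\beta_n=|\tilde\Delta_0^{(n)}|/|\Delta_0^{(n)}|$, obtains $|\ln\beta_n-\ln\beta_{n-1}|=|\ln f_n(0)-\ln\tilde f_n(0)|\le Cn^{-\gamma}$ from Theorem \ref{renormcon} (so $\beta=\lim\beta_n$ exists because $\gamma>2$), identifies $\beta$ with $Dh(\xi_0)$, and then propagates the identity along the orbit by combining $h\circ f=\tilde f\circ h$ with the cohomological equation --- exactly your cocycle computation in Step 3. Your Step 2 is in fact more careful than the paper at the break point (the paper simply writes $\lim_n|h(\Delta_0^{(n)})|/|\Delta_0^{(n)}|=Dh(\xi_0)$), and using $\mathfrak{d}_n(\xi)\le Cn^{-\gamma/2}$ from the proof of Lemma \ref{Lambda} together with the lower bound on $|\mathfrak{r}_n(\xi)|$ on the shells, coming from Lemma \ref{A1B1}, is a sound way to get differentiability of $h$ at $\xi_0$.

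The problem is Step 4, which is where your proposal stops being a proof. You correctly identify that passing from $Dh=\beta e^{\zeta}$ on the dense orbit $\Xi$ to the statement at an arbitrary point is the delicate part, you reduce it to the claim $R_m\to1$ about the two ``shape factors'', and then you explicitly leave that claim unproved, offering only a programme (decompose the $i_m$ iterates along the renormalization tower and invoke Theorem \ref{renormcon} and Lemma \ref{bounded}). That claim is precisely the nontrivial content of the step: as you note yourself, the distortion of $f^{i_m}$ on an interval of unit relative size does not tend to $1$, so everything hinges on a genuine two-map comparison estimate, uniform in the base point, which your text does not carry out and which is not a routine consequence of the results you cite. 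Note also that this is not how the paper concludes: the paper never examines difference quotients off the orbit; having (\ref{L8}) on $\Xi$ and continuity of $\zeta$ (Theorem \ref{criteria}), it extends $Dh$ continuously from the dense set $\Xi$ to all of $\mathbb{S}^1$, the same device it uses again for the $C^{1+\omega_{\gamma}}$ estimate (\ref{T4}). So your instinct that some uniform quantitative statement is what ultimately underlies $C^1$-smoothness is reasonable, but as submitted the decisive estimate is missing, and the route you chose for it (the ratio $R_m$ of shape factors of $f^{i_m}$ and $\tilde f^{i_m}$) is considerably heavier than anything in the paper's own proof; in its present form the argument establishes the lemma only on $\Xi$, not on $\mathbb{S}^1$.
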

\begin{proof}
Denote by
$\beta_{n}=|\tilde{\Delta}^{(n)}_{0}|/|\Delta^{(n)}_{0}|.$
Since the rotation number of $f$ and $\tilde{f}$ is bounded type,
Theorem \ref{renormcon} and inequality (\ref{G0a}) imply that
\begin{equation}\label{L7}
|\ln \beta_{n}-\ln \beta_{n-1}|=|\ln f_{n}(0)-\ln \tilde{f}_{n}(0)|
\leq \frac{1}{L_{m}(\hat{v})}|f_{n}(0)-\tilde{f}_{n}(0)|\leq \frac{C}{n^{\gamma
}}.
\end{equation}
Since $\gamma>2$ the sequence  $(\ln \beta_{n})_{n}$ and as well as $(\beta_{n})_{n}$ is
convergent. Let $\beta=\lim_{n \rightarrow \infty}\beta_{n}.$
It follows from
$$
\lim_{n\rightarrow \infty}\frac{|\tilde{\Delta}^{(n)}_{0}|}{|\Delta^{(n)}_{0}|}
=\lim_{n\rightarrow \infty}\frac{|h(\Delta^{(n)}_{0})|}{|\Delta^{(n)}_{0}|}=Dh(\xi_{0})
$$
and $\zeta(\xi_{0})=0$ that $Dh(\xi_{0})=\beta e^{\zeta(\xi_{0})}.$
From this and the equality $h\circ f=\tilde{f}\circ h$
we deduce
\begin{equation}\label{L7a1}
\log Dh(\xi_{i})-\log Dh(\xi_{i-1})=\log D\tilde{f}(h(\xi_{i-1}))-\log Df(\xi_{i-1}).
\end{equation}
for any $\xi_{i} \in \Xi$ where $\xi_{i}=f^{i}(\xi_{0}),$ $i\geq 1.$
 The cohomological equation (\ref{F0a})
implies that
\begin{equation}\label{L7a2}
\zeta(\xi_{i})-\zeta(\xi_{i-1})=\log D\tilde{f}(h(\xi_{i-1}))-\log Df(\xi_{i-1}).
\end{equation}
Combining (\ref{L7a1}) and (\ref{L7a2}) we get
$$
\log Dh(\xi_{i})-\zeta(\xi_{i})=\log Dh(\xi_{i-1})-\zeta(\xi_{i-1})
$$
which implies
 $$
 \log Dh(\xi_{i})-\zeta(\xi_{i})=\log Dh(\xi_{0})-\zeta(\xi_{0}).
$$
Hence
\begin{equation}\label{L8}
Dh(\xi_{i})=\beta e^{\zeta(\xi_{i})}
\end{equation}
for any $\xi_{i} \in \Xi.$
Since $\zeta$ is continuous and $\Xi$ is dense in $\mathbb{S}^{1}$   the function
$Dh$ can be continuously extended to the whole of $\mathbb{S}^{1}$
verifying the equality (\ref{L8}). This proves Lemma \ref{c1 smoothness}
and concludes the  $C^{1}$-smoothness of the conjugacy.
\end{proof}

\subsection{$C^{1+\omega_{\gamma}}$-smoothness of conjugacy}
It follow from $C^{1}$-smoothness of conjugacy
and the equality $h\circ f=\tilde{f}\circ h$
that
\begin{equation}\label{T1}
\log Dh\circ f-\log Dh=\log D\tilde{f}\circ h- \log Df.
\end{equation}
Consider the points $\xi_{i}$ and $\xi_{i+q_{n-1}+sq_{n}}$ where $1\leq s \leq k_{n+1}.$
It is clear that $\xi_{i}, \xi_{i+q_{n-1}+sq_{n}}\in \Delta^{(n-1)}_{i}.$
The relation (\ref{T1}) implies
$$
 |\log Dh(\xi_{i+q_{n-1}+sq_{n}})-\log Dh(\xi_{i})|\leq s\Lambda_{n}+\Lambda_{n-1}.
$$
Consequently, for any $\xi_{j}\in \Xi\cap \check{\Delta}^{(n-1)}_{i}$ we have
$$
 |\log Dh(\xi_{j})-\log Dh(\xi_{i})|\leq C\sum_{\ell=n}^{\infty}k_{\ell+1}\Lambda_{\ell}.
$$
Since $k_{\ell+1}$ is bounded from Lemma \ref{Lambda} it implies that
\begin{equation}\label{T2}
 |\log Dh(\xi_{j})-\log Dh(\xi_{i})|\leq C\sum_{\ell=n}^{\infty}\frac{1}{\ell^{\frac{\gamma}{2}}}\leq \frac{C}{n^{\frac{\gamma}{2}-1}}.
\end{equation}
It is obvious that
\begin{equation}\label{T2a1}
|\Delta^{(n+1)}_{i}|\leq |\xi_{j}-\xi_{i}|\leq |\Delta^{(n-1)}_{i}|.
\end{equation}
Lemma  \ref{comporision} implies that there exist $\mu_{1},\mu_{2}\in(0,1)$
verifying $\mu_{1}<\mu_{2}$ such that
\begin{equation}\label{T2a2}
\mu_{1}^{n}\leq |\Delta^{(n)}|\leq \mu_{2}^{n}
\end{equation}
for any $\Delta^{(n)}\in \mathcal{P}_{n}.$ Relations (\ref{T2a1}) and (\ref{T2a2})
imply
\begin{equation}\label{T3}
n=\mathcal{O}\Big(\frac{1}{\big|\log|\xi_{j}-\xi_{i}|\big|}\Big).
\end{equation}
 Combining (\ref{T2}) with (\ref{T3})
 we can assert that
  \begin{equation}\label{T4}
 |\log Dh(\xi_{j})-\log Dh(\xi_{i})|\leq \frac{C}{\big|\log|\xi_{j}-\xi_{i}|\big|^{\frac{\gamma}{2}-1}}.
\end{equation}
Since $\Xi$ is dense in $\mathbb{S}^{1},$ the function $Dh$ can be continuously extended to the whole of $\mathbb{S}^{1}$ verifying the inequality (\ref{T4}).
This proves $C^{1+\omega_{\gamma}}$-smoothness of the conjugacy.
Theorem \ref{Main} is proved.



\begin{thebibliography}{24}
\bibitem{ADK2017} H. Akhadkulov, A. Dzhalilov, K. Khanin,
Notes on a theorem of Katznelson and Ornstein,
Dis. Con. Dyn. Sys.  \textbf{37} (9), \, pp. 4587-4609,
\, (2017).

\bibitem{Ar1961} V. I. Arnol'd, Small denominators: I. Mappings from the
circle onto itself. Izv. Akad. Nauk SSSR, Ser. Mat.,
\,\textbf{25},\, pp. 21-86, \,(1961).


 \bibitem{Renorm} Habibulla Akhadkulov, Mohd Salmi Md Noorani
and Sokhobiddin Akhatkulov,  Renormalization of circle diffeomorphisms
with a break-type singularity, Nonlinearity \textbf{30},\, pp. 2687-2717,
\, (2017).




%
%
%
\bibitem{He1979} M. Herman, Sur la conjugaison diff\'{e}rentiable des
diff\'{e}omorphismes du cercle \`{a} des rotations. Inst. Hautes
Etudes Sci. Publ. Math., \,\textbf{49}, \,pp. 5-234, \,(1979).



%
%
 \bibitem{KO1989.1} Y. Katznelson and D. Ornstein, The differentiability
of the conjugation of certain diffeomorphisms of the circle.
Ergod. Theor. Dyn. Syst.,\,\textbf{9},\,pp. 643-680,  \,(1989).

\bibitem{KO1989.2} Y. Katznelson and D. Ornstein, The absolute continuity
of the conjugation of certain diffeomorphisms of the circle.
Ergod. Theor. Dyn. Syst.,\,\textbf{9},\,pp. 681-690,  \,(1989).
%
\bibitem{KhKhem2003} K. Khanin and D. Khmelev, Renormalizations and rigidity theory for circle homeomorphisms with singularities of break type,
Commun. Math. Phys., 235, No. 1,\, pp. 69-124, \, (2003).

\bibitem{KhKocic2013} K. Khanin, S. Kocic, Absence of robust rigidity for circle maps with breaks,
Annales de l'Institut Henri Poincar\`{e} (C) Non Linear Analysis
\textbf{30}, (3),\, pp. 385-399, \, (2013).

\bibitem{KSasa} K. Khanin, S. Koci\'{c},
Renormalization conjecture and rigidity theory for circle diffeomorphisms with breaks,
Geometric and Functional Analysis, \textbf{24}(6),\, pp. 2002-2028, \,(2014).


\bibitem{KSE} K. Khanin, S. Koci\'{c}, E. Mazzeo, $C^1$-rigidity of circle diffeomorphisms with breaks for almost all
rotation numbers. \textcolor[rgb]{0.00,0.07,1.00}{http://www.ma.utexas.edu/mp arc/c/11/11-102.pdf}

%
\bibitem{KS1989} K. Khanin and Ya. Sinai, Smoothness of conjugacies
of diffeomorphisms of the circle with rotations. Russ. Math.
Surv., \,\textbf{44},\, pp. 69-99, \,(1989), translation of Usp. Mat.
Nauk, \,\textbf{44},\, pp. 57-82, \,(1989).

\bibitem{KT2007} K. Khanin, A. Teplinsky, Robust rigidity for diffeomorphisms
with singularities. Invent. Math.\,\textbf{169},\, pp. 193-218, \,(2007).


\bibitem{KT2013} K. Khanin, A. Teplinsky, Renormalization Horseshoe and Rigidity
for Circle Diffeomorphisms with Breaks. Commun. Math. Phys. 320, pp. 347-377, \,(2013).




\bibitem{KT2009} K. M. Khanin
and A. Yu. Teplinsky. Herman's theory revisited. Invent. math.,\,\textbf{178},\, pp. 333-344, \,(2009).



\bibitem{KV1991} K. Khanin, E.  Vul, Circle homeomorphisms with weak discontinuities.
In proc. of Dynamical systems and statistical mechanics (Moscow,
1991), pp. 57-98. Amer. Math. Soc, Providence, RI,\, (1991).


\bibitem{MeloFarie I} E. de Faria, W. de Melo,  Rigidity of critical cirle
maps I, J. Eur. Math. Soc.\,\textbf{1}(4),\, pp. 339-392, \,(1999).

\bibitem{MeloFarie II} E. de Faria, W. de Melo,  Rigidity of critical cirle
maps II, Am.  Math. Soc.\,\textbf{13}(2),\, pp. 343-370, \,(2000).

\bibitem{Yampol} M. Yampolsky, Hyperbolicity of renormalization of critical circle maps,
Publ. Math. Inst. Hautes Sci. \textbf{96}, pp.  1-41, \,(2002).

\bibitem{pablo1} P. Guarino and W. de Melo, Rigidity of smooth critical circle
maps. Journal of  European Mathematical Society
\textbf{19} (6), \, pp. 1729-1783,\, (2017).

\bibitem{pablo2}  P. Guarino, M. Martens and W. de Melo, Rigidity of critical circle
maps. Duke Math. J. \textbf{167} (11), \, pp. 2125-2188, \,  (2018).

%
%
%
%





%
%
%
%
\bibitem{Yo1984} J.-C. Yoccoz, Conjugaison diff\'{e}rentiable des diff\'{e}omorphismes du cercle dont le nombre de rotation v\'{e}rifie une condition diophantienne, Ann. Sci. \'{E}cole Norm. Sup. (4) 17 (3), pp.  333-359, \, (1984).



\end{thebibliography}
\end{document}